\documentclass{article}
\usepackage[affil-it]{authblk}
\usepackage[utf8]{inputenc}
\usepackage{amsmath, amsfonts, amsthm, amssymb, latexsym}
\usepackage{geometry}
\usepackage{enumerate}
\usepackage{multirow}
\usepackage{rotating}
\usepackage{graphicx,color}
\usepackage{float}
\usepackage{mathrsfs}

\usepackage{xargs}
\usepackage[pdftex,dvipsnames]{xcolor}

\newcommand{\dt}{\partial_t}
\newcommand{\ddt}{\partial^2_t}
\newcommand*\diff{\mathop{}\!\mathrm{d}}
\newcommand{\lge}{\langle}
\newcommand{\rge}{\rangle}
\newcommand{\zm}{{Z^m}}
\newcommand{\zmm}{{Z^{m-1}}}

\usepackage{hyperref}
\hypersetup{colorlinks=true, pdfstartview=FitV,linkcolor=blue!70!black,citecolor=red!70!black, urlcolor=green!60!black}
\definecolor{labelkey}{rgb}{0.6,0,0}

\usepackage[colorinlistoftodos,prependcaption,textsize=small]{todonotes}
\newcommandx{\change}[2][1=]{\todo[#1]{#2}}
\newcommandx{\unsure}[2][1=]{\todo[linecolor=red,backgroundcolor=red!25,bordercolor=red,#1]{#2}}
\newcommandx{\rmk}[2][1=]{\todo[linecolor=blue,backgroundcolor=blue!25,bordercolor=blue,#1]{#2}}
\newcommandx{\info}[2][1=]{\todo[linecolor=OliveGreen,backgroundcolor=OliveGreen!25,bordercolor=OliveGreen,#1]{#2}}
\newcommandx{\improvement}[2][1=]{\todo[linecolor=Plum,backgroundcolor=Plum!25,bordercolor=Plum,#1]{#2}}
\newcommandx{\thiswillnotshow}[2][1=]{\todo[disable,#1]{#2}}

\newtheorem{thm}{Theorem}[section]

\newtheorem{prop}[thm]{Proposition}

\theoremstyle{definition}

\theoremstyle{remark}

\def\dt{\partial_t}

%%%%%%%%%%%% defined by Y 

%%%%%%%%%%%%

\title{An inverse problem for the fractionally damped
	wave equation}

\author[1]{Li Li \thanks{lil33@uci.edu}}
\affil[1]{Department of Mathematics, University of California, Irvine, CA 92697, USA}

\author[2]{Yang Zhang \thanks{yangzh26@uw.edu}}
\affil[2]{Department of Mathematics, University of Washington,
	Seattle, WA 98195, USA}

\date{}

\begin{document}
	
	\maketitle
	
	\noindent \textbf{ABSTRACT.}\, 
	%We study the well-posedness of a nonlinear wave equation with fractional damping, and we formulate an associated inverse problem. Our goal is to determine the variable coefficient in the nonlinear term based on the knowledge of the source-to-solution map and the a priori knowledge of the coefficient in an arbitrarily small subset of the domain. Our approach relies on a second order linearization technique as well as the unique continuation property of the spectral fractional Laplacian.
	We consider an inverse problem for a Westervelt type nonlinear wave equation with fractional damping.
	This equation arises in nonlinear acoustic imaging and we show the forward problem is locally well-posed.
	We prove that the smooth coefficient of the nonlinearity can be uniquely determined, based on the knowledge of the source-to-solution map and a priori knowledge of the coefficient in an arbitrarily small subset of the domain.
	Our approach relies on a second order linearization  as well as the unique continuation property of the spectral fractional Laplacian.

	\smallskip
	\section{Introduction} \label{Sec:Intro}
	Ultrasound waves are widely used in medical imaging.
	The propagation of high-intensity ultrasound waves are modeled by nonlinear wave equations; see \cite{humphrey2003non}.
	Nonlinear ultrasound waves play an important rule in diagnostic and therapeutic medicine, for example, see \cite{Anvari2015,Demi2014,Harrer2003}.
	Damping effects naturally exist for wave equations in many fields of physics and engineering, for example, see \cite{aanonsen1984distortion}.
	
	In this paper, we consider a nonlinear wave equation of Westervelt type with a damping term, given by 
	\begin{equation}\label{WesterD}
		\partial^2_t (u- \kappa u^2) -\Delta u + D u= f.
	\end{equation}
	Here we focus on the space-fractional damping $D= \partial_t (-\Delta)^s$, which models the case when the damping is frequency-dependent and obeys an empirical power law, see \cite{chen2004fractional} and also \cite{kaltenbacher2021some}.
	The spectral fractional Laplacian $(-\Delta)^s$ ($0< s< 1$) is the fractional power of the Dirichlet Laplacian $-\Delta= (-\Delta)_\Omega$ (the restriction of the Laplacian to the functions satisfying the homogeneous Dirichlet boundary condition on $\partial\Omega$).
	
	More explicitly, let $\Omega$ be a bounded domain with smooth boundary.
	We consider the problem 
	\begin{equation}\label{nonlinearfracdamp}
		%\left\{
		\begin{aligned}
			\partial^2_{t} (u- \kappa(x, t)u^2) -\Delta u + \partial_t(-\Delta)^s u&= f,\quad \,\,\,(x, t)\in \Omega\times (0, T),\\
			u & = 0,\quad \,\,\,(x, t)\in \partial \Omega\times (0, T),\\
			u(0)= \partial_t u(0)&= 0,\quad \,\,\,\, x\in \Omega,\\
		\end{aligned}
		%\right.
	\end{equation}
	where $u$ is the pressure field of the acoustic waves and 
	$\kappa\in C^\infty(\bar{\Omega}\times [0, T])$ is the coefficient of the nonlinearity.
	%$\Omega$ is a bounded domain with smooth boundary,
	
	We show the local well-posedness of  (\ref{nonlinearfracdamp}) at least for sufficiently regular and small $f$, see Section \ref{Sec:Forward}.  Then for any given nonempty and open $W\subset \Omega$, we can define the  source-to-solution map
	\begin{equation}\label{stos}
		L_{\kappa, W}: f\to u|_{W\times (0, T)},\qquad 
		f\in C^\infty_c(W\times (0, T)).
	\end{equation}
	The goal is to determine $\kappa$ in the whole domain $\Omega\times (0, T)$ based on the knowledge of $L_{\kappa, W}$ and the a priori knowledge of $\kappa$ in $W\times (0, T)$. The following theorem is our main result.
	
	\begin{thm}\label{mthm}
		Suppose $\kappa_1, \kappa_2\in C^\infty(\bar{\Omega}\times [0, T])$ and $\kappa_1= \kappa_2$ in $W\times (0, T)$. Then
		\begin{equation}\label{L12}
			L_{\kappa_1, W}= L_{\kappa_2, W}
		\end{equation}
		implies $\kappa_1= \kappa_2$ in $\Omega\times (0, T)$.
	\end{thm}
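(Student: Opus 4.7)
My plan is to reduce Theorem \ref{mthm} to an integral identity via a standard second-order linearization of \eqref{nonlinearfracdamp}, and then to extract $\kappa_1 - \kappa_2$ from that identity by a Runge-type density argument built on the unique continuation property of the spectral fractional Laplacian.

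The starting point is to consider a source $\epsilon_1 f_1 + \epsilon_2 f_2$ with $f_1, f_2 \in C_c^\infty(W \times (0,T))$ and expand the corresponding solution $u^\epsilon$ of \eqref{nonlinearfracdamp} in $(\epsilon_1, \epsilon_2)$, which the local well-posedness established in Section \ref{Sec:Forward} justifies. The first-order terms $v_j := \partial_{\epsilon_j} u^\epsilon|_{\epsilon=0}$ solve the linear damped wave equation $\partial_t^2 v - \Delta v + \partial_t(-\Delta)^s v = f_j$ with zero Cauchy and Dirichlet data; they are independent of $\kappa$ and so the first linearization carries no information. The mixed second derivative $w := \partial_{\epsilon_1}\partial_{\epsilon_2} u^\epsilon|_{\epsilon=0}$ solves the same linear equation with source $2\partial_t^2(\kappa v_1 v_2)$, and the hypothesis \eqref{L12} forces $w_1 - w_2$ to vanish on $W \times (0,T)$. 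I would then pair $w_1 - w_2$ with a solution $v_3$ of the time-reversed adjoint equation $\partial_t^2 v_3 - \Delta v_3 - \partial_t(-\Delta)^s v_3 = f_3$, driven by a source $f_3 \in C_c^\infty(W \times (0,T))$ and equipped with zero data at $t = T$. An integration by parts, exploiting the selfadjointness of $-\Delta$ and $(-\Delta)^s$ under Dirichlet boundary conditions and the vanishing of $w_1 - w_2$ on the support of $f_3$, should give
\begin{equation*}
\int_0^T \int_\Omega (\kappa_1 - \kappa_2)\, v_1 v_2\, \partial_t^2 v_3 \, dx\, dt = 0
\end{equation*}
for every admissible triple $(f_1, f_2, f_3)$.

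The concluding and most delicate step is to deduce $\kappa_1 = \kappa_2$ pointwise on $\Omega \times (0,T)$ from this identity together with the a priori equality on $W \times (0,T)$. My strategy is a Hahn-Banach / Runge approximation argument: if the set of products $v_1 v_2 \partial_t^2 v_3$ is dense enough in an appropriate space, then no nontrivial $\mu = \kappa_1 - \kappa_2$ supported off of $W$ can annihilate all of them. The density in turn reduces to the approximation property that any smooth function on $\Omega$ can be matched, in a suitable sense, by a linear solution with source in $W$; and the failure of this approximation would produce, via duality, a nonzero function supported outside $W$ annihilated by all such solutions, which through the nonlocal $(-\Delta)^s$ term would contradict the unique continuation property of the spectral fractional Laplacian on $\Omega$. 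Carrying out this Runge approximation in the time-dependent, fractionally damped wave setting, and combining it with the temporal variation of the sources so as to eliminate the time profile alongside the spatial support, is the main obstacle: the nonlocality of the damping, which destroys finite speed of propagation, is exactly what makes the approximation plausible, but it must be quantified and converted into density before the argument can close.
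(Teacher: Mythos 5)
Your overall architecture (second-order linearization, unique continuation of $(-\Delta)^s$, Runge approximation) matches the paper, and your route to the integral identity is a legitimate variant: you pair the difference of the second-order terms with an adjoint solution $v_3$ and use that this difference vanishes on $\operatorname{supp} f_3\subset W\times(0,T)$, whereas the paper does something stronger and more distinctly nonlocal at this stage. Namely, since $u^{(1)}_{\epsilon}=u^{(2)}_{\epsilon}$ on $W\times(0,T)$ and $\kappa_1=\kappa_2$ there, the equation restricted to $W$ gives $(-\Delta)^s\partial_t(u^{(1)}_{\epsilon}-u^{(2)}_{\epsilon})=0$ on $W\times(0,T)$; Proposition \ref{UCP} then forces $u^{(1)}_{\epsilon}=u^{(2)}_{\epsilon}$ on \emph{all} of $\Omega\times(0,T)$, hence the second-order terms agree everywhere and one obtains the \emph{pointwise} identity $\partial^2_t\big((\kappa_1-\kappa_2)w_1w_2\big)=0$ in $\Omega\times(0,T)$. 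This lets the paper test against an arbitrary fixed function $\tilde\phi(x,t)=(t-T)^2\phi(x)$ with $\phi>0$, so that $\partial_t^2\tilde\phi=2\phi$ is a fixed positive weight and no third solution is needed.

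The genuine gap is in your closing step. You reduce to the identity $\int\!\!\int(\kappa_1-\kappa_2)v_1v_2\,\partial_t^2v_3=0$ and then appeal to ``density of the products $v_1v_2\,\partial_t^2v_3$,'' which you yourself flag as unresolved. As formulated this will not close: the Runge property one can actually prove (Proposition \ref{RAP}, via Hahn--Banach, the adjoint problem, and Proposition \ref{UCP}) gives density of the restrictions $u_f|_{(\Omega\setminus W)\times(0,T)}$ in $L^2$, not density of triple products, and $\partial_t^2v_3$ is not reached by any such approximation result (one cannot integrate by parts in $t$ onto $(\kappa_1-\kappa_2)v_1v_2$ without picking up uncontrolled boundary terms at $t=0$, and $\{\partial_t^2 v_3\}$ is constrained by the equation itself). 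The fix is exactly the paper's: avoid the third solution altogether, obtain the pointwise identity via UCP, test against $(t-T)^2\phi$, and then use Runge only twice, choosing $w_1\to 1$ and $w_2\to\kappa_1-\kappa_2$ in $L^2\big(0,T;L^2(\Omega\setminus W)\big)$ (the product then converges in $L^1$ by Cauchy--Schwarz), which yields $\int_0^T\!\int_{\Omega\setminus W}(\kappa_1-\kappa_2)^2\phi=0$. Without either that maneuver or a correctly formulated density statement for the class of functions you actually pair against, your argument does not conclude.
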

	
	\subsection{Connection with earlier literature}
	The inverse problem of determining the nonlinear coefficient from the Dirichlet-to-Neumann map without damping is studied in \cite{ultra21} for the  Westervelt equation and in \cite{UZ_acoustic} for a more general nonlinear model.
	%In \cite{ultra21} the author consider the nonlinear wave equation of Westervelt type,
	%using the second-order linearization and Gaussian beams.
	%In \cite{UZ_acoustic}, the recovery of a more general nonlinearity from the Dirichlet-to-Neumann map is considered, using the interaction of distorted plane waves.
	In \cite{eptaminitakis2022weakly}, the authors consider the reconstruction of the nonlinear coefficient using high frequency waves for the Westervelt equation.
	In \cite{zhang2023nonlinear}, the recovery of both a general nonlinearity and a weakly damping term from the Dirichlet-to-Neumann map is studied. 
	The main idea is to use multi-fold linearization and interaction of distorted plane waves. 
	In this case, the nonlinearity helps to solve the inverse problem, as is first shown in \cite{Kurylev2018}. 
	Other damped or attenuated models have been studied in   \cite{kaltenbacher2011well,
		kaltenbacher2012analysis,
		kaltenbacher2021some,
		k2022parabolic,
		kaltenbacher2022simultanenous,
		fu2022inverse,
		kaltenbacher2021identification}. 
	
	The rigorous mathematical study of (Calder\'on type) inverse problems for space-fractional equations was initiated in \cite{ghosh2020calderon} where the authors considered the
	exterior Dirichlet problem
	\begin{equation}\label{eDp}
		((-\Delta)^s+ q)u= 0\quad \mathrm{in}\,\,\Omega,\qquad
		u|_{\Omega_e}= g,
	\end{equation}
	where $(-\Delta)^s$ is the fractional Laplacian in $\mathbb{R}^n$ and $\Omega_e:= \mathbb{R}^n\setminus \Omega$.
	They defined the associated Dirichlet-to-Neumann map
	\begin{equation}\label{DNmapfrac}
		\Lambda_q: g\to (-\Delta)^su|_{\Omega_e}.
	\end{equation}
	Rather than constructing complex geometrical optics solutions (which have been used for solving the classical Calder\'on problem), 
	the authors exploited the nonlocal features of the fractional operator to uniquely determine the potential $q$ in $\Omega$ from partial knowledge of $\Lambda_q$. We refer readers to \cite{covi2023reduction,covi2022global, kow2022calderon,lai2023inverse,li2020determining,li2023elas,li2023porous,li2023inverse,lin2023strong} for further unique determination results for fractional operators based on the knowledge of the Dirichlet-to-Neumann map.
	
	Besides, there are also several unique determination results for fractional operators based on the knowledge of the source-to-solution map
	in the existing literature. In \cite{feizmohammadi2021fractional, quan2022calder, chien2022inverse}, the authors considered equations involving spectral fractional operators on manifolds, and they determined the Riemannian manifold up to an isometry.
	
	In this paper, we combine the elements in \cite{ghosh2020calderon} and \cite{feizmohammadi2021fractional} in the setting of fractional damping.
	Our source-to-solution map (\ref{stos}) can be viewed as an analogue of (1.2) in \cite{feizmohammadi2021fractional}. Our approach to proving the unique determination result is motivated by the framework established in \cite{ghosh2020calderon}. We will see that nonlocal phenomenons will play a fundamental role in solving the inverse problem as expected.

	\subsection{Organization}
	The rest of this paper is organized in the following way. In Section 2, we will summarize the background knowledge. In Section 3, we will first show the well-posedness of a linear problem associated with (\ref{nonlinearfracdamp}) and obtain several regularity results. Then we will further use a fixed-point argument to show the well-posedness of (\ref{nonlinearfracdamp}) for small $f$. In Section 4, we will first prove the unique continuation property of the spectral fractional Laplacian and derive the related Runge approximation property. Then we will combine the unique continuation property and the Runge approximation property with a second order linearization technique to prove the main theorem.
	
	\medskip
	
	\noindent \textbf{Acknowledgements.} L.L. and Y.Z. would like to thank Professor Katya Krupchyk and Professor Gunther Uhlmann for helpful discussions.

	\smallskip
	\section{Preliminaries} \label{Sec:Prelim}
	
	Throughout this paper we use the following notations.
	
	\begin{itemize}
		
		\item We fix the space dimension
		$n=3$. (All arguments in the inverse problem section work for the general $n$. Our restriction $n= 3$ is only used in Proposition \ref{emb}
		and Proposition \ref{linkpest}.)
		
		\item We fix the fractional power $0< s< 1$ and the length of the time interval $T> 0$.
		
		\item $\Omega$ denotes a bounded domain with smooth boundary.
		
		\item $c, C, C', C_1,\cdots$ denote positive constants (which may depend on some parameters).
		
		\item $\langle \cdot, \cdot\rangle$ denotes the standard $L^2$-distributional pairing.
		
	\end{itemize}
	
	\subsection{Sobolev spaces and fractional operators}
	We use $H^s$ to denote the standard $W^{s,2}$-type Sobolev space.
	
	Let $U$ be an open set in $\mathbb{R}^n$. Let $F$ be a closed set in $\mathbb{R}^n$. Then 
	$$H^s(U):= \{u|_U: u\in H^s(\mathbb{R}^n)\},\qquad 
	H^s_F(\mathbb{R}^n):= 
	\{u\in H^s(\mathbb{R}^n): \mathrm{supp}\,u\subset F\},$$
	$$\tilde{H}^s(U):= 
	\mathrm{the\,\,closure\,\,of}\,\, C^\infty_c(U)\,\,\mathrm{in}\,\, H^s(\mathbb{R}^n).$$
	
	Since $\Omega$ is a bounded domain with smooth boundary, we have the identification
	$\tilde{H}^s(\Omega)= H^s_{\bar{\Omega}}(\mathbb{R}^n)$, and its dual space is
	$H^{-s}(\Omega)$.
	
	The Dirichlet Laplacian $-\Delta$ is a non-negative self-adjoint operator in $\tilde{H}^1(\Omega)$. Therefore there exists an orthonormal
	basis of $L^2(\Omega)$ consisting of eigenfunctions $\phi_k\in \tilde{H}^1(\Omega)$ ($k= 1,2,\cdots,$) that correspond to the eigenvalues $0< \lambda_1\leq \lambda_2\leq\cdots$. The domain of the spectral fractional Laplacian is defined by
	$$\mathrm{Dom}((-\Delta)^s):=
	\{u\in L^2(\Omega): \sum_{k=1}^\infty \lambda_k^{s} |\langle u, \phi_k \rangle|^2 < +\infty\}.$$
	
	It is well-known (see Subsection 2.1 in \cite{caffarelli2016fractional} and Subsection 3.1.3 in \cite{bonforte2014existence}) that $\mathrm{Dom}((-\Delta)^s)= \tilde{H}^s(\Omega)$ and the spectral fractional Laplacian mapping $\tilde{H}^s(\Omega)$ into 
	$H^{-s}(\Omega)$ is defined by
	\begin{equation}\label{specfrac}
		(-\Delta)^s u:=\sum_{k=1}^\infty \lambda_k^{s} \langle u, \phi_k \rangle\phi_k.
	\end{equation}
	
	The spectral fractional Laplacian can be also equivalently defined via the semigroup approach (see Lemma 2.2 in \cite{caffarelli2016fractional}). Let $U(x, t)= e^{-t(-\Delta)}u(x)$ be the solution of the parabolic problem
	\begin{equation}
		\left\{
		\begin{aligned}
			\partial_{t} U -\Delta U &= 0,\quad \,\,\,(x, t)\in \Omega\times (0, \infty),\\
			U&= 0,\quad \,\,\, (x,t)\in \partial\Omega\times (0, \infty),\\
			U|_{t=0}&= u,\quad \,\,\,x\in\Omega.\\
		\end{aligned}
		\right.
	\end{equation}
	Then for $u\in \tilde{H}^s(\Omega)$,
	\begin{equation}\label{semifracdef}
		(-\Delta)^s u= \frac{1}{\Gamma(-s)}\int^\infty_0(U- u)\frac{\diff t}{t^{1+s}}
	\end{equation}
	in $H^{-s}(\Omega)$, where $\Gamma(\cdot)$ is the standard Gemma function. More precisely, for 
	$v\in \tilde{H}^s(\Omega)$, we have
	\begin{equation}\label{semifracL2}
		\langle (-\Delta)^s u, v\rangle= \frac{1}{\Gamma(-s)}\int^\infty_0(\langle U, v\rangle- \langle u, v\rangle)\frac{\diff t}{t^{1+s}}.
	\end{equation}
	We remark that the spectral fractional Laplacian defined here is different from the restriction of 
	$(-\Delta_{\mathbb{R}^n})^s$ to $\Omega$, although they enjoy several common properties (see Subsection 2.1 in \cite{bonforte2014existence}).
	
	\subsection{Set Z}
	To study the well-posedness of (\ref{nonlinearfracdamp}), we introduce the set $\zm(R,T)$ consisting of $u$ satisfying
	$$u \in \bigcap_{k=0}^{m} H^{m-k}(0,T; H^k(\Omega)),
	\qquad \|u\|^2_{\zm} = \sum_{k=0}^m \int_{0}^T   \|\partial_t^{m-k} u(t)\|^2_{H^k} \diff t \leq R^2.$$
	
	The proof of Claim 1 in \cite{Uhlmann2021a} ensures that
	$\zm(R,T)$ has the following property.
	
	\begin{prop}\label{normineq}
		Suppose $u \in Z^m(R, T)$ for some $R>0$.
		Then $\|u\|_\zmm \leq \|u\|_\zm$ and $\dt u \in Z^{m-1}(R, T)$ with $\|\dt u\|_\zmm \leq \|u\|_\zmm$. Moreover, we have the following estimates.
		\begin{enumerate}[(1)]
			\item If $v \in Z^m(R', T)$, then $\|uv\|_\zm \leq C \|u\|_\zm \|v\|_\zm$.
			\item If $v \in \zmm(R', T)$, then $\|uv\|_\zmm \leq C \|u\|_\zm \|v\|_\zmm$.
		\end{enumerate}
	\end{prop}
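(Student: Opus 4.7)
The plan is to dispose of the two scalar monotonicity statements by direct comparison of sums and then reduce the two product estimates to Leibniz plus Sobolev multiplication in $\mathbb{R}^3$.

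For the inequality $\|u\|_{Z^{m-1}} \leq \|u\|_{Z^m}$, I would unfold the definition,
$$\|u\|_{Z^{m-1}}^2 = \sum_{k=0}^{m-1} \int_0^T \|\partial_t^{m-1-k} u(t)\|_{H^k}^2 \, dt,$$
apply the spatial embedding $H^{k+1}(\Omega) \hookrightarrow H^k(\Omega)$ in each summand, and reindex $k' = k+1$ to identify the resulting expression with the tail $\sum_{k'=1}^{m} \int_0^T \|\partial_t^{m-k'} u\|_{H^{k'}}^2 \, dt$, which is bounded by $\|u\|_{Z^m}^2$. The bound for $\partial_t u$ is even more immediate: $\|\partial_t u\|_{Z^{m-1}}^2 = \sum_{k=0}^{m-1} \int_0^T \|\partial_t^{m-k} u\|_{H^k}^2 \, dt$ is exactly the partial sum obtained from $\|u\|_{Z^m}^2$ by discarding the $k = m$ term, so the inequality is purely combinatorial.

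For the product estimates I would expand by the Leibniz rule,
$$\partial_t^{m-k}(uv) = \sum_{j=0}^{m-k} \binom{m-k}{j} \, (\partial_t^j u)(\partial_t^{m-k-j} v),$$
and bound the $H^k(\Omega)$ norm of each summand by a Sobolev multiplication inequality in dimension three. Since $n = 3$, the algebra property $H^\ell \cdot H^\ell \hookrightarrow H^\ell$ holds as soon as $\ell \geq 2$; for the low-regularity indices $k \in \{0,1\}$ I would distribute the factors using H\"older together with $H^1(\Omega) \hookrightarrow L^6(\Omega)$ and $H^2(\Omega) \hookrightarrow L^\infty(\Omega)$. In time I would use the embedding $H^1(0,T) \hookrightarrow L^\infty(0,T)$ to place the factor carrying fewer time derivatives into $L^\infty_t$ and leave the other in $L^2_t$, after which Cauchy--Schwarz closes the estimate. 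A case split on whether $j \leq (m-k)/2$ assigns each Leibniz term to the correct side of the inequality, and summing over $j$ and $k$ yields $\|uv\|_{Z^m} \leq C\|u\|_{Z^m}\|v\|_{Z^m}$.

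Statement (2) would follow by the same scheme with the asymmetry built into the choice of which factor takes the larger norm: whenever the Leibniz term forces one factor into a top-order spatial derivative or an $L^\infty$-in-time norm, I would assign that role to $u$, whose richer $Z^m$-control supplies the extra regularity beyond $Z^{m-1}$; the companion derivative of $v$ is then only asked for a norm actually recorded by $\|v\|_{Z^{m-1}}$. The main obstacle I anticipate is the bookkeeping at the boundary indices $k = 0$ and $k = 1$, where $H^k$ is not an algebra and one must verify that the three-dimensional Sobolev exponents and the distribution of time derivatives mesh consistently; in particular, one has to check that every Leibniz term still admits a valid assignment in which the factor sent into $L^\infty_x$ (or $L^6_x$) carries at least two (respectively one) spatial derivatives available from the hypothesis. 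Once those marginal cases are handled, summing the Leibniz contributions concludes both product estimates.
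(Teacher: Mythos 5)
The paper itself does not prove this proposition; it defers to Claim 1 of the cited reference, so there is no in-paper argument to compare against line by line. Your overall strategy (Leibniz in $t$, Sobolev multiplication in $x$ for $n=3$, and $H^1(0,T)\hookrightarrow L^\infty(0,T)$ to place the factor with fewer time derivatives in $L^\infty_t$) is the standard and correct route. Two remarks on the scalar inequalities first. Your derivation of $\|u\|_{\zmm}\leq\|u\|_{\zm}$ via $\|w\|_{H^k}\leq\|w\|_{H^{k+1}}$ and reindexing is fine. For the time derivative, note that what you actually prove is $\|\dt u\|_{\zmm}\leq\|u\|_{\zm}$, not the inequality $\|\dt u\|_{\zmm}\leq\|u\|_{\zmm}$ as printed: the latter cannot hold in general, since $\|\dt u\|_{\zmm}^2$ contains $\int_0^T\|\partial_t^m u\|_{L^2}^2\,\diff t$, which $\|u\|_{\zmm}$ does not see. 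The printed statement is evidently a typo, and your version is exactly what is needed to conclude $\dt u\in Z^{m-1}(R,T)$; you should say so explicitly rather than silently proving a different inequality.

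The genuine gap is at the top spatial index $k=m$ in part (1), which you do not flag (you only single out $k\in\{0,1\}$). There the Leibniz sum degenerates to the single term $uv$ measured in $L^2(0,T;H^m(\Omega))$, both factors carry zero time derivatives, and neither lies in $L^\infty(0,T;H^m(\Omega))$: the $\zm$-norm only gives $u\in L^2_tH^m_x\cap H^1_tH^{m-1}_x$, hence $u\in L^\infty_tH^{m-1}_x$ at best. The multiplication $H^{m-1}\cdot H^m\hookrightarrow H^m$ is false (the target index must not exceed the minimum of the factors' indices), so the scheme ``one factor in $L^\infty_t$ of a high enough space, then the algebra property'' cannot close this term. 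The fix is to replace the plain algebra property by the tame (Moser) estimate
\begin{equation*}
\|fg\|_{H^m}\leq C\bigl(\|f\|_{H^m}\|g\|_{L^\infty}+\|f\|_{L^\infty}\|g\|_{H^m}\bigr),
\end{equation*}
which, combined with $\|u\|_{L^\infty_{t,x}}\leq C\|u\|_{H^1(0,T;H^2(\Omega))}\leq C\|u\|_{\zm}$ (valid since $m\geq 3$), yields $\int_0^T\|uv\|_{H^m}^2\,\diff t\leq C\|u\|_{\zm}^2\|v\|_{\zm}^2$. All other indices, including the asymmetric estimate (2) at its top index $k=m-1$ (where $u\in H^1_tH^{m-1}_x\hookrightarrow L^\infty_tH^{m-1}_x$ and $H^{m-1}$ is an algebra), do go through under your case split. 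With the tame estimate supplied for $k=m$, your argument is complete.
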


	\smallskip
	\section{Forward problem} \label{Sec:Forward}
	
	\subsection{Linear equation}
	We first study the well-posedness of the linear problem 
	\begin{equation}\label{lin}
		\left\{
		\begin{aligned}
			\partial^2_{t} u -\Delta u + \partial_t(-\Delta)^s u&= f,\quad \,\,\,(x, t)\in\Omega\times (0, T),\\
			u(0)= \partial_t u(0)&= 0,\quad \,\,\,x\in\Omega.\\
		\end{aligned}
		\right.
	\end{equation}
	
	\begin{prop}\label{linwell}
		For any $f\in L^2(0, T; L^2(\Omega))$, (\ref{lin}) has a unique solution $u$ satisfying
		$$u\in H^2(0, T; H^{-1}(\Omega))\cap W^{1,\infty}(0, T; L^2(\Omega))\cap L^\infty(0, T; \tilde{H}^1(\Omega)),\qquad \partial_t u\in 
		L^2(0, T; \tilde{H}^s(\Omega)).$$ 
		Moreover, for $t\in [0, T]$, we have the estimate
		\begin{equation}\label{linest}
			\|\dt u(t)\|_{L^2}^2 + \|\nabla u(t)\|_{L^2}^2
			+ \int_0^t \| \dt (-\Delta)^{s/2}u(\tau) \|_{L^2}^2 \diff \tau
			\leq  C\int_{0}^t \|f(\tau)\|_{L^2}^2 \diff \tau.
		\end{equation}
	\end{prop}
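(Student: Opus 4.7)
The plan is to build the solution by a Faedo–Galerkin scheme based on the spectral basis $\{\phi_k\}$ introduced in Section~2, which is natural because the damping operator and the Laplacian are simultaneously diagonalized. Set $V_N=\mathrm{span}(\phi_1,\ldots,\phi_N)$ and look for $u_N(x,t)=\sum_{k=1}^N c_k^N(t)\phi_k(x)$ satisfying, for each $k\le N$, the decoupled scalar ODE
\[
\ddot c_k^N(t)+\lambda_k c_k^N(t)+\lambda_k^{s}\dot c_k^N(t)=\langle f(\cdot,t),\phi_k\rangle,\qquad c_k^N(0)=\dot c_k^N(0)=0.
\]
Standard ODE theory gives a unique solution $c_k^N\in H^2(0,T)$ for each $f\in L^2(0,T;L^2(\Omega))$, so $u_N$ is well defined on $[0,T]$.

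Next I would derive the basic energy identity by multiplying the $k$th ODE by $\dot c_k^N$ and summing over $k$, which is equivalent to testing the Galerkin equation with $\partial_t u_N$. Using the identification of $(-\Delta)^s$ via the expansion~(\ref{specfrac}), this yields
\[
\tfrac12\tfrac{d}{dt}\bigl(\|\partial_t u_N\|_{L^2}^2+\|\nabla u_N\|_{L^2}^2\bigr)+\|(-\Delta)^{s/2}\partial_t u_N\|_{L^2}^2=\langle f,\partial_t u_N\rangle.
\]
Integrating in $t$, applying Cauchy–Schwarz and Young on the right, and invoking Gr\"onwall furnishes the bound (\ref{linest}) for $u_N$ uniformly in $N$. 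The dissipative term on the left also provides a uniform control of $\partial_t u_N$ in $L^2(0,T;\tilde H^s(\Omega))$, since $\mathrm{Dom}((-\Delta)^{s/2})=\tilde H^s(\Omega)$.

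I would then pass to the limit. By Banach–Alaoglu there is a subsequence with $u_N\stackrel{\ast}{\rightharpoonup}u$ in $L^\infty(0,T;\tilde H^1(\Omega))$, $\partial_t u_N\stackrel{\ast}{\rightharpoonup}\partial_t u$ in $L^\infty(0,T;L^2(\Omega))$, and $\partial_t u_N\rightharpoonup\partial_t u$ in $L^2(0,T;\tilde H^s(\Omega))$. Weak lower semicontinuity preserves (\ref{linest}) for $u$. Testing against $\phi_k\varphi(t)$ with $\varphi\in C_c^\infty(0,T)$ and letting $N\to\infty$ identifies $u$ as a weak solution, and the zero initial data for $u$ and $\partial_t u$ transfer to the limit thanks to the embedding $L^\infty(0,T;\tilde H^1)\cap W^{1,\infty}(0,T;L^2)\hookrightarrow C([0,T];L^2)$ together with $c_k^N(0)=\dot c_k^N(0)=0$. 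Reading the equation itself then upgrades $\partial_t^2 u=\Delta u-\partial_t(-\Delta)^s u+f$ to lie in $L^2(0,T;H^{-1}(\Omega))$, because $\Delta u\in L^\infty(0,T;H^{-1})$, $\partial_t(-\Delta)^s u\in L^2(0,T;H^{-s})\subset L^2(0,T;H^{-1})$ (as $s\le 1$), and $f\in L^2(0,T;L^2)$. Uniqueness is immediate from (\ref{linest}) applied to the difference of two solutions with $f=0$.

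The main obstacle I expect is the careful handling of the damping term in the passage to the limit: the product $\partial_t(-\Delta)^s u$ must be interpreted in an appropriate duality, and one has to be sure that the weak convergence of $\partial_t u_N$ in $L^2(0,T;\tilde H^s)$ is strong enough to identify $\langle\partial_t(-\Delta)^s u,v\rangle$ with the limit of $\langle(-\Delta)^{s/2}\partial_t u_N,(-\Delta)^{s/2}v\rangle$ against test functions $v\in C_c^\infty(\Omega\times(0,T))$. Once the symmetry of $(-\Delta)^{s/2}$ on $\tilde H^s(\Omega)$ is invoked this is routine, but it is the one point where the nonlocality of the damping forces a slightly different bookkeeping than in the classical (local) Kelvin–Voigt case.
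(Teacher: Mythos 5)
Your proposal is correct and follows essentially the same route as the paper: a Faedo--Galerkin scheme in the spectral basis $\{\phi_k\}$, the energy identity obtained by testing with $\partial_t u_N$, uniform bounds, weak-$*$ passage to the limit, and uniqueness from the a priori estimate. The only cosmetic difference is that you close the energy estimate with Gr\"onwall, whereas the paper absorbs the $\int_0^t\|\partial_t u_l\|_{L^2}^2$ term directly into the dissipative term via the Poincar\'e inequality $\|(-\Delta)^{s/2}v\|_{L^2}^2\geq c\|v\|_{L^2}^2$; both yield (\ref{linest}).
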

	
	\begin{proof}
		We use the Galerkin method. For $l\in \mathbb{N}$,
		consider the approximate solution $u_l(t)$ of the form $\sum_{k=1}^l u_{l,k}(t) \phi_k$ satisfying
		\[
		\lge \partial^2_{t} u_l, v \rge + \lge \nabla u_l, \nabla v \rge + \lge \partial_t (-\Delta)^s u_l, v \rge
		= \lge f, v \rge
		\]
		for any $v$ in the space spanned by $\phi_1, \ldots, \phi_l$ and the initial conditions $u_l(0)= \partial_t u_l(0)= 0$. (The standard theory for linear ODE 
		systems ensures that $C^2$-function $u_{l,k}$ can be uniquely determined.)
		
		By choosing $v = \dt u_l$, we have
		$$\frac{1}{2}(\frac{\diff }{\diff t} \|\dt u_l(t)\|_{L^2}^2 + \frac{\diff }{\diff t} \|\nabla u_l(t)\|_{L^2}^2) + \|\dt (-\Delta)^{s/2} u_l(t)\|_{L^2}^2
		= \lge f, \dt u_l \rge,$$
		$$\frac{1}{2}( \|\dt u_l(t)\|_{L^2}^2
		+ \|\nabla u_l(t)\|_{L^2}^2 ) + \int_0^t\|\dt (-\Delta)^{s/2} u_l(\tau)\|_{L^2}^2\diff \tau\\
		= \int_0^t \lge f(\tau), \dt u_l(\tau) \rge \diff \tau.
		$$
		Since the first eigenvalue the Dirichlet Laplacian is strictly positive, the definition of the spectral fractional Laplacian (\ref{specfrac}) ensures the Poincar\'e inequality
		$$\|(-\Delta)^{s/2} v\|_{L^2}^2\geq 
		c\|v\|_{L^2}^2,\qquad v\in \tilde{H}^s(\Omega).$$
		Using the inequality
		$$\int_0^t \lge f(\tau), \dt u_l(\tau) \rge \diff \tau\leq \frac{2}{c}\int_0^t\|f(\tau)\|_{L^2}^2 \diff \tau
		+ \frac{c}{2}\int_0^t\|\dt u_l(\tau)\|_{L^2}^2 \diff \tau,$$
		we obtain
		$$\|\dt u_l(t)\|_{L^2}^2
		+ \|\nabla u_l(t)\|_{L^2}^2  + \int_0^t\|\dt (-\Delta)^{s/2} u_l(\tau)\|_{L^2}^2\diff \tau\\
		\leq \frac{4}{c}\int_0^t\|f(\tau)\|_{L^2}^2 \diff \tau.
		$$
		Thus, $\{u_l\}^\infty_{l=1}$ is bounded in 
		$$H^2(0, T; H^{-1}(\Omega))\cap W^{1,\infty}(0, T; L^2(\Omega))\cap L^\infty(0, T; \tilde{H}^1(\Omega))$$
		and $\{\partial_t u_l\}^\infty_{l=1}$ is bounded in $L^2(0, T; \tilde{H}^s(\Omega))$. By using the standard compactness argument, we can find a subsequence of 
		$\{u_l\}^\infty_{l=1}$ weakly convergent to $u$ satisfying (\ref{lin}) and (\ref{linest}). The uniqueness of the solution directly follows from
		the estimate (\ref{linest}).
	\end{proof}
	
	To study the well-posedness of (\ref{nonlinearfracdamp}) later, we also need to consider the following linear problem 
	\begin{equation}\label{linkp}
		\left\{
		\begin{aligned}
			(1- 2\kappa v)\partial^2_{t} u -\Delta u + \partial_t(-\Delta)^s u&= f,\quad \,\,\,(x, t)\in\Omega\times (0, T),\\
			u(0)= \partial_t u(0)&= 0,\quad \,\,\,x\in\Omega.\\
		\end{aligned}
		\right.
	\end{equation}
	
	\begin{prop}\label{linkpest}
		Suppose $m \geq 8$, $\kappa \in C^\infty(\bar{\Omega}\times [0, T])$ and $f \in \zmm(R, T)$. If $v \in \zmm(r_0, T)$ for some small constant $r_0> 0$, then (\ref{linkp}) has a unique solution $u$ satisfying 
		$$u \in \bigcap_{k=0}^{m} H^{m-k}(0,T; H^k(\Omega)),
		\qquad \|u\|_{\zm} \leq C \|f\|_{\zmm}.$$
	\end{prop}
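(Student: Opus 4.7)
I would mirror the Galerkin strategy of Proposition \ref{linwell}, but carry out energy estimates at all time-derivative levels $k = 0, 1, \dots, m-1$ simultaneously and then trade time regularity for spatial regularity using the equation itself. Writing $a(x,t) := 1 - 2\kappa(x,t) v(x,t)$, the assumption $v \in \zmm(r_0, T)$ with $m \geq 8$, combined with the Sobolev-type embedding available for $\zmm$ in dimension $n=3$ (cf.\ Proposition \ref{emb}), gives $\|a - 1\|_{L^\infty(\Omega\times(0,T))} \leq C r_0$. Choosing $r_0$ small enough therefore ensures $\tfrac{1}{2} \leq a \leq \tfrac{3}{2}$ uniformly, which is exactly what is needed for $a\,\dt^2 u$ to play the coercive role that $\dt^2 u$ played in Proposition \ref{linwell}.

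On the Galerkin level, I would project the equation onto the span of $\phi_1, \dots, \phi_l$ to obtain an ODE system whose solution $u_l$ is global thanks to the positivity of $a$. For each $k = 0, \dots, m-1$, differentiate the equation $k$ times in $t$ and test against $\dt^{k+1} u_l$ to get the identity
\[
\tfrac{1}{2}\tfrac{d}{dt}\bigl(\lge a\,\dt^{k+1} u_l, \dt^{k+1} u_l\rge + \|\nabla \dt^k u_l\|_{L^2}^2\bigr) + \|\dt^{k+1}(-\Delta)^{s/2} u_l\|_{L^2}^2 = \lge \dt^k f, \dt^{k+1} u_l\rge + R_k,
\]
where $R_k$ collects the Leibniz remainders coming from $\dt^k(a\,\dt^2 u_l)$, namely the terms in which at least one time derivative falls on $a$. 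Since $\kappa$ is smooth and $v \in \zmm$, each $\dt^j a$ lies in $\zmm$, and the product estimate in Proposition \ref{normineq} yields $|R_k| \leq C(1 + \|v\|_{\zmm})\,\|u_l\|_{\zm}\,\|\dt^{k+1} u_l\|_{L^2}$. Summing over $k$, using the Poincar\'e inequality established in Proposition \ref{linwell}, and invoking Gronwall's inequality (with $\|v\|_{\zmm} \leq r_0$ small) produce uniform bounds on $\dt^{k+1} u_l$ in $L^\infty(0,T;L^2)$, on $\dt^k u_l$ in $L^\infty(0,T;\tilde H^1)$, and on $\dt^{k+1}(-\Delta)^{s/2}u_l$ in $L^2(0,T;L^2)$, for each $k \leq m-1$.

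To recover the missing $L^2(0,T;H^{m-k})$ components of the $\zm$-norm, I would rewrite the $k$-times differentiated equation as $-\Delta \dt^k u_l = \dt^k f - \dt^k(a\,\dt^2 u_l) - \dt^{k+1}(-\Delta)^s u_l$ and apply Dirichlet elliptic regularity iteratively, starting at $k = m-2$ and decreasing; at each step the variable-coefficient product is controlled by Proposition \ref{normineq} and the nonlocal term by the $\tilde H^s$ control of $\dt^{k+1} u_l$ already obtained. A weak-limit/compactness argument then produces a solution $u \in \zm$ with $\|u\|_{\zm} \leq C \|f\|_{\zmm}$, and uniqueness follows from the $k=0$ energy estimate applied to the difference of two solutions, the problem being linear in $u$. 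The main obstacle I anticipate is the bookkeeping for $R_k$: every Leibniz remainder must be absorbed either by the coercive left-hand side or by Gronwall, and smallness of $r_0$ is essential so that the ``top-order'' remainder (where the Leibniz derivatives fall entirely on $a - 1$) does not overwhelm the dissipation furnished by the fractional damping.
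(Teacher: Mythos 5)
Your overall architecture --- Galerkin approximation, energy estimates on the time-differentiated equation tested against $\dt^{k+1}u_l$, coercivity of $a$ from the smallness of $r_0$, elliptic bootstrapping to recover the spatial components of the $\zm$-norm, and uniqueness from the lowest-order energy estimate --- is the same as the paper's Appendix proof. The gap is in your treatment of the top-order Leibniz remainders, which is precisely the delicate point. You claim $|R_k|\leq C(1+\|v\|_{\zmm})\,\|u_l\|_{\zm}\,\|\dt^{k+1}u_l\|_{L^2}$ uniformly in $k$ and propose to run all levels $k=0,\dots,m-1$ simultaneously. This bound is circular: $\|u_l\|_{\zm}$ is the quantity being estimated, the prefactor $C(1+\|v\|_{\zmm})$ is not small, and the summed energy identity only controls the $H^m(0,T;L^2)\cap H^{m-1}(0,T;H^1)$ portion of the $\zm$-norm, not the full $\zm$-norm appearing in your bound --- so neither Gronwall nor absorption closes the argument.

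More fundamentally, the worst remainders at the two highest levels are pairings such as $\lge (\dt^{m-2}a)\,\dt^{2}u_l,\ \dt^{m-1}u_l\rge$ and $\lge (\dt^{m-1}a)\,\dt^{2}u_l,\ \dt^{m}u_l\rge$. Since $v\in\zmm$, the coefficients $\dt^{m-2}a$ and $\dt^{m-1}a$ are only square-integrable, not $L^\infty$; the only way to estimate these terms is to place $\dt^{2}u_l$ and $\dt^{3}u_l$ in $L^\infty(\Omega\times(0,T))$. That $L^\infty$ control does \emph{not} follow from the energy estimates alone (which give only $L^2$ and $H^1$ spatial regularity); it requires $\dt^{2}u_l,\dt^{3}u_l\in H^1(0,T;H^2(\Omega))\hookrightarrow C([0,T];C(\bar\Omega))$, which the paper obtains in its Step 2 by testing the differentiated equation against $\dt^{i-1}(-\Delta)^k u_l$ for the lower indices $i\leq m-2$ \emph{before} attempting the levels $i=m-1,m$. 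So the estimates cannot be run simultaneously across all levels: one must first close the energy estimates for $i\leq m-2$ (where every coefficient $\dt^{i+1-j}a$, $j\geq 2$, carries at most $m-3$ time derivatives and is therefore $L^\infty$ by Proposition \ref{emb}), then upgrade the spatial regularity of the low time derivatives of $u_l$, and only then treat the two top levels. This ordering is exactly where the hypothesis $m\geq 8$ enters, and your proposal does not reproduce it.
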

	
	We can still use the Galerkin method to prove the proposition above, but we need to derive more complicated energy estimates. We refer readers to Appendix for more details.
	
	\subsection{Nonlinear equation}
	Based on Proposition \ref{linkpest}, we can use a fixed-point argument to show the well-posedness of (\ref{nonlinearfracdamp}) for small $f$.
	
	\begin{prop}\label{nonlinkpest}
		Suppose $m \geq 8$, $\kappa \in C^\infty(\bar{\Omega}\times [0, T])$ and $f \in \zmm(\rho, T)$. Then for sufficiently small $\rho> 0$,
		(\ref{nonlinearfracdamp}) has a unique solution $u$ satisfying 
		$$u \in \bigcap_{k=0}^{m} H^{m-k}(0,T; H^k(\Omega)),
		\qquad \|u\|_{\zm} \leq C \|f\|_{\zmm}.$$
	\end{prop}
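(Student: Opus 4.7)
The plan is to set up a Banach fixed-point scheme based on Proposition \ref{linkpest}. Expanding
\[
\partial_t^2(u - \kappa u^2) = (1 - 2\kappa u)\partial_t^2 u - 2\kappa(\partial_t u)^2 - 4(\partial_t\kappa)\, u\, \partial_t u - (\partial_t^2 \kappa)\, u^2,
\]
the equation (\ref{nonlinearfracdamp}) is equivalent to
\[
(1 - 2\kappa u)\partial_t^2 u - \Delta u + \partial_t(-\Delta)^s u = f + N(u),
\qquad
N(w) := 2\kappa(\partial_t w)^2 + 4(\partial_t\kappa)\, w\, \partial_t w + (\partial_t^2\kappa)\, w^2.
\]
For $v$ in a small ball of $Z^m(R, T)$ with $R \leq r_0$, define $\Phi(v) := u$ as the solution of the linear problem (\ref{linkp}) with forcing $f + N(v)$. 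Proposition \ref{linkpest} makes this well-defined, and any fixed point of $\Phi$ is a solution of (\ref{nonlinearfracdamp}).

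Next I verify self-mapping. Using the multiplicative estimates in Proposition \ref{normineq} together with the smoothness of $\kappa$, each summand of $N(v)$ is a product of two factors controlled by $\|v\|_{Z^m}$, which gives $\|N(v)\|_{Z^{m-1}} \leq C\|v\|_{Z^m}^2$. Applying Proposition \ref{linkpest} then yields
\[
\|\Phi(v)\|_{Z^m} \leq C\bigl(\|f\|_{Z^{m-1}} + \|v\|_{Z^m}^2\bigr) \leq C\rho + C R^2.
\]
Setting $R := 2C\rho$ and shrinking $\rho$ so that $C R^2 \leq \rho$ and $R \leq r_0$, we obtain $\Phi(Z^m(R, T)) \subset Z^m(R, T)$.

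Then I run a contraction. For $v_1, v_2 \in Z^m(R, T)$ and $u_i := \Phi(v_i)$, the difference $w := u_1 - u_2$ solves the linear problem (\ref{linkp}) with coefficient $(1 - 2\kappa v_1)$ and source
\[
[N(v_1) - N(v_2)] + 2\kappa (v_1 - v_2)\,\partial_t^2 u_2.
\]
Each of these terms is bilinear and carries one factor of $v_1 - v_2$ against a factor whose $Z^m$-norm is bounded by $R$. Another application of Proposition \ref{linkpest} yields $\|w\|_\ast \leq C R\, \|v_1 - v_2\|_\ast$ in an appropriate norm $\|\cdot\|_\ast$, which is a strict contraction after shrinking $\rho$ once more. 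The Banach fixed-point theorem then produces a unique $u \in Z^m(R, T)$ with $\Phi(u) = u$, and the bound $\|u\|_{Z^m} \leq C\|f\|_{Z^{m-1}}$ follows from the self-mapping step.

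The main technical obstacle is choosing the space in which to close the contraction. Running it directly in $Z^m$ is awkward because $\partial_t^2 u_2$ only lies in $Z^{m-2}$, so controlling the extra term $(v_1 - v_2)\,\partial_t^2 u_2$ in $Z^{m-1}$ requires a careful bilinear estimate that mixes orders. The standard workaround is to perform the contraction in a weaker space such as $Z^{m-2}$, where the product estimates from Proposition \ref{normineq} close comfortably, and then recover the limit in $Z^m(R, T)$ via weak-$\ast$ compactness of the closed ball; uniqueness in $Z^m(R, T)$ is inherited from the weaker-norm contraction. This is the step that genuinely uses the interplay between the quasilinear coefficient $(1 - 2\kappa u)$, the lack of elliptic gain from the fractional damping $\partial_t(-\Delta)^s$, and the scale of spaces $Z^m$ tailored to Proposition \ref{linkpest}.
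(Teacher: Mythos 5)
Your proposal follows essentially the same route as the paper: the same expansion of $\partial_t^2(u-\kappa u^2)$, the same fixed-point map $v\mapsto u$ through Proposition \ref{linkpest}, and the same self-mapping and contraction estimates via Proposition \ref{normineq}. The only divergence is in the last step: the paper runs the contraction directly in $Z^m$, asserting the $Z^{m-1}$ bound on the term $2\kappa(v_2-v_1)\partial_t^2 u_2$ from Proposition \ref{normineq}, whereas you --- rightly noting that $\partial_t^2 u_2$ a priori only lies in $Z^{m-2}$ --- close the contraction in a weaker norm and recover the $Z^m$ fixed point by weak compactness of the ball, which is a more careful variant of the same argument.
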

	\begin{proof}
		We consider the problem 
		\begin{equation}
			\left\{
			\begin{aligned}
				(1- 2\kappa v)\partial^2_{t} u -\Delta u + \partial_t(-\Delta)^s u&= f+ 2\kappa(\partial_t v)^2+ 4(\partial_t\kappa)v\partial_t v+ (\partial^2_t\kappa)v^2,\quad \,\,\,(x, t)\in\Omega\times (0, T),\\
				u(0)= \partial_t u(0)&= 0,\quad \,\,\,x\in\Omega.\\
			\end{aligned}
			\right.
		\end{equation}
		For given $f \in \zmm(\rho, T)$, we consider the map
		$$J: v\to u,\qquad v\in Z^m(r, T).$$
		The parameters $\rho, r< r_0$ will be chosen later to ensure $J$ is a contraction map on 
		$Z^m(r, T)$.
		
		In fact, by Proposition \ref{normineq} and Proposition \ref{linkpest}, we have
		$$\|f+ 2\kappa(\partial_t v)^2+ 4(\partial_t\kappa)v\partial_t v+ (\partial^2_t\kappa)v^2\|_{\zmm}\leq 
		\|f\|_{\zmm}+ C_{\kappa}\|v\|^2_{\zm},$$
		\begin{equation}\label{estur}
			\|u\|_{\zm}\leq C'_{\kappa}(\|f\|_{\zmm}+ \|v\|^2_{\zm})\leq C'_{\kappa}(\rho+ r^2).
		\end{equation}
		Hence, we can choose $r < \min\{1,\, 1/{(2C'_{\kappa})}$\} and $\rho = r/(2C'_{\kappa})$ to ensure that $J$ maps $Z^m(r, T)$ into itself.
		Now let $u_j= Jv_j$ for $v_j\in Z^m(r, T)$ ($j= 1,2$). We write $w:= u_2- u_1$. Then $w$ satisfies
		$$(1- 2\kappa v_1)\partial^2_{t} w -\Delta w + \partial_t(-\Delta)^s w= 2\kappa (\dt v_1 + \dt v_2)(\dt v_2 - \dt v_1) + 2\kappa(v_2 - v_1) \ddt u_2$$
		$$+ 4(\partial_t\kappa)(v_2- v_1)\partial_t v_1
		+ 4(\partial_t\kappa)v_2\partial_t(v_2-v_1)
		+(\partial^2_t\kappa)(v_1+ v_2)(v_1- v_2).$$
		By Proposition \ref{normineq}, the $\zmm$-norm of the right hand side is bounded above by
		$$C'(\|v_1\|_{\zm}+ \|v_2\|_{\zm}+ ||u_2||_{\zm})\|v_1- v_2\|_{\zm}.$$
		Since $v_j\in Z^m(r, T)$ implies $u_j\in Z^m(r, T)$,
		by Proposition \ref{linkpest} we have
		$$\|u_2- u_1\|_{\zm}\leq C''r\|v_1- v_2\|_{\zm}.$$
		Hence, $J$ is a contraction map for small $r$.
		
		Note that the fixed point $u= v$ of $J$ is the solution of (\ref{nonlinearfracdamp}). Also note that for small $r$ and $v\in Z^m(r, T)$, we have
		$C'_{\kappa}\|v\|_{\zm}< \frac{1}{2}$
		in (\ref{estur}). This implies $\|u\|_{\zm} \leq C \|f\|_{\zmm}$.
	\end{proof}
	
	\smallskip
	\section{Inverse problem} \label{Sec:IP}
	
	\subsection{Unique continuation property}
	The following proposition is an analogue of the 
	unique continuation property of the fractional Laplacian in $\mathbb{R}^n$, which was first established in \cite{ghosh2020calderon} based on the
	the Carleman estimates in \cite{ruland2015unique}. Here we will exploit the semigroup definition of the spectral fractional Laplacian (\ref{semifracdef}) and the unique continuation property of the classical parabolic operator in the proof. This idea was also used for proving Theorem 1.1 in \cite{feizmohammadi2021fractional} and Proposition 3.2 in \cite{ghosh2021calder}.
	
	\begin{prop}\label{UCP}
		Let $u\in \tilde{H}^s(\Omega)$. Suppose 
		$$(-\Delta)^s u= u= 0$$ in $W$. Then $u= 0$ in $\Omega$.
	\end{prop}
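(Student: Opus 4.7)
The plan is to reduce the fractional unique continuation to the unique continuation property of the classical heat operator by using the semigroup identity (\ref{semifracL2}) as a bridge. First I would introduce the Dirichlet heat extension $U(x,t) := e^{-t(-\Delta)}u(x)$, which solves $\dt U - \Delta U = 0$ in $\Omega\times(0,\infty)$ with vanishing Dirichlet data on $\partial\Omega$ and initial condition $U(\cdot,0)=u$. The spectral expansion $U(x,t)=\sum_k e^{-\lambda_k t}\lge u,\phi_k\rge \phi_k(x)$ shows that $U$ is smooth on $\Omega\times(0,\infty)$, real-analytic in $t$, and decays exponentially as $t\to\infty$ since $\lambda_1>0$.

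Next I would translate the two hypotheses into conditions on $U$ by pairing (\ref{semifracL2}) with test functions $v\in C^\infty_c(W)$. Since $u=0$ in $W$ gives $\lge u,v\rge=0$ and $(-\Delta)^s u=0$ in $W$ gives $\lge(-\Delta)^s u,v\rge=0$, the semigroup identity collapses to
\[
    \int_0^\infty \lge U(\cdot,t),v\rge\, t^{-1-s}\,\diff t = 0,\qquad v\in C^\infty_c(W),
\]
to be read together with the vanishing trace $U(\cdot,0)|_W=0$. These are the two independent pieces of information that encode the fractional hypotheses.

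The heart of the argument is to upgrade this pair of scalar conditions into vanishing of $U$ on a spacetime open set of the form $W\times I$. My strategy is to exploit the real-analyticity of $t\mapsto U(x,t)$ and the Dirichlet-series structure $U(x,t)=\sum_k c_k \phi_k(x) e^{-\lambda_k t}$: the vanishing trace together with the moment identity should be combined — using analyticity in $t$, the linear independence of the exponentials $\{e^{-\lambda_k t}\}$, and the classical UCP for Laplace eigenfunctions on $W$ — into the pointwise statement that $U(x,t)=0$ for $x\in W$ and $t$ in some open interval. Once this is in hand, the UCP for the heat operator $\partial_t-\Delta$ (of Lin / Escauriaza--Vessella type) forces $U\equiv 0$ in $\Omega\times(0,\infty)$, and passing to $t\to 0^+$ delivers $u=0$ in $\Omega$.

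I expect the upgrade step above to be the main obstacle, since two scalar conditions per point generally do not pin down an analytic function of $t$. The proof must therefore exploit the heat equation structure further: for instance, by iterating in $t$ via the semigroup (using that $\dt^j U|_{t=0}=(-\Delta)^j u$ vanishes distributionally in $W$ for every $j\ge 0$, which gives polynomial moment conditions against the Dirichlet spectrum that combine with the $\lambda^s$-moment) or, equivalently, by passing through a Stinga--Torrea-type parabolic extension where the hypotheses manifest as vanishing Dirichlet and weighted Neumann Cauchy data on $W\times\{0\}$ and invoking a Carleman estimate in the spirit of \cite{ghosh2020calderon}. Either way, the interplay between the analyticity of the heat semigroup and the nonlocality of $(-\Delta)^s$ is essential, mirroring the role played by the Caffarelli--Silvestre extension in the $\mathbb{R}^n$ setting.
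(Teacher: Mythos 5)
Your reduction to the Dirichlet heat semigroup is exactly the paper's starting point, and you correctly identify the crux: upgrading the single weighted-moment identity $\int_0^\infty \langle U(t),v\rangle\, t^{-1-s}\,\diff t=0$ to the vanishing of $U$ on a spacetime open set. But you leave that step as an acknowledged obstacle, and the mechanism you sketch for it does not work. The productive move is \emph{not} to use $\partial_t^j U|_{t=0}=(-\Delta)^j u=0$ on $W$: fixing $W'\subset\subset W$ with $c'=\mathrm{dist}(W',\Omega\setminus W)$, the off-diagonal Gaussian bound $p_t(x,y)\leq Ct^{-n/2}e^{-|x-y|^2/(ct)}$ gives $\langle U(t),\phi\rangle=O\bigl(t^{-n/2}e^{-(c')^2/(ct)}\bigr)$ as $t\to 0$ for $\phi\in C^\infty_c(W')$, since $u$ is supported in $\Omega\setminus W$; so \emph{all} $t$-derivatives of $U$ vanish at $t=0$ on $W'$ automatically and those conditions carry no information. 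What the paper does instead is apply $-\Delta$ to the moment identity itself (legitimate, since the identity holds as a distribution on the open set $W'$), use the heat equation to convert $\Delta U$ into $\partial_t U$, and integrate by parts in $t$ --- the boundary terms at $t=0$ and $t=\infty$ die precisely because of the Gaussian bound above --- to raise the weight by one. Iterating yields $\int_0^\infty U(x,t)\,t^{-m-s}\,\diff t=0$ on $W'$ for every $m\in\mathbb{N}$.

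The second missing ingredient is why infinitely many vanishing moments force $U\equiv 0$ on $W'\times(0,\infty)$: moments alone do not determine a function without a quantitative decay hypothesis. After the substitution $\lambda=1/t$, the function $V(x,\lambda)=\lambda^{s}U(x,1/\lambda)1_{(0,\infty)}(\lambda)$ decays like $e^{-(c')^2\lambda/c}$, so $\xi\mapsto\int \langle V(\lambda),\phi\rangle e^{-i\xi\lambda}\,\diff\lambda$ is holomorphic in the strip $\mathrm{Im}\,\xi<(c')^2/c$; the vanishing moments say all its derivatives at $\xi=0$ are zero, hence the transform vanishes identically and $U=0$ on $W'\times(0,\infty)$. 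From there your appeal to parabolic UCP and the limit $t\to 0^+$ matches the paper. Your alternative suggestion (a Stinga--Torrea type extension plus Carleman estimates) is a plausible but entirely different and more technical route, which you do not carry out; as written, the proposal therefore has a genuine gap at its central step.
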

	\begin{proof} Based on the semigroup definition (\ref{semifracdef}), the assumption implies 
		$$\int^\infty_0 \frac{U(x, t)}{t^{1+s}}\,dt= 0,\qquad x\in W,$$
		where 
		$$U(x, t):=  e^{-t(-\Delta)}u(x)= \int_\Omega p_t(x, y) u(y)\diff y$$
		and $p_t(x, y)$ is the heat kernel associated with the Dirichlet Laplacian. The integral here (and all integrals below) should be interpreted in the distributional sense, i.e.
		$$\int^\infty_0 \frac{\langle U(t), \phi\rangle}{t^{1+s}}\,dt= 0,\qquad \phi\in C^\infty_c(W).$$
		Based on the heat kernel estimate (see Corollary 3.2.8 in \cite{davies1989heat})
		$$p_t(x, y)\leq Ct^{-\frac{n}{2}}e^{-\frac{|x-y|^2}{ct}},\qquad x,y\in \Omega,\,t> 0.$$
		
		Now we fix a nonempty $W'\subset \subset W$. Let $c'= \mathrm{dist}(W', \Omega\setminus W)$. Then we have
		\begin{equation}\label{ptxyest}
			p_t(x, y)\leq Ct^{-\frac{n}{2}}e^{-\frac{(c')^2}{ct}}
		\end{equation}
		for $x\in W', y\in \Omega\setminus W$. For $m\in \mathbb{N}$, 
		we will inductively show that
		\begin{equation}\label{Uint}
			\int^\infty_0 \frac{U(x, t)}{t^{m+s}}\,dt= 0,\qquad x\in W'.
		\end{equation}
		
		In fact, once we have shown the case $m$, we apply $-\Delta$ to (\ref{Uint}). Since $U$ solves the heat equation,
		we have
		\begin{equation}\label{tUint}
			\int^\infty_0 \frac{\partial_t U(x, t)}{t^{m+s}}\,dt
			=\int^\infty_0 \frac{\Delta U(x, t)}{t^{m+s}}\,dt= 0,\qquad x\in W'.
		\end{equation}
		Note that for $\phi\in C^\infty_c(W)$, by (\ref{ptxyest}) we have
		$$\langle U(t), \phi\rangle= \int_{W'}\int_\Omega
		p_t(x, y)u(y)\phi(x)\diff y\diff x
		= \int_{W'}\int_{\Omega\setminus W}
		p_t(x, y)u(y)\phi(x)\diff y\diff x\leq  C't^{-\frac{n}{2}}e^{-\frac{(c')^2}{ct}},$$
		so $\frac{\langle U(t), \phi\rangle}{t^{m+s}}$ vanishes at both $0$ and $+\infty$.
		Hence we can integrate by parts to derive
		\begin{equation}\label{tmUint}
			\int^\infty_0 \frac{U(x, t)}{t^{m+1+s}}\,dt= 0,\qquad x\in W'
		\end{equation}
		from (\ref{tUint}). Hence we have verified (\ref{Uint}).
		
		Now we consider the substitution $\lambda= \frac{1}{t}$ and define
		$$V(x, \lambda):= 1_{(0, \infty)}(\lambda)\frac{U(x, \frac{1}{\lambda})}{\lambda^{-s}}.$$
		Then (\ref{Uint}) becomes
		\begin{equation}\label{Vint}
			\int_{\mathbb{R}}V(x, \lambda)\lambda^{m-1}\diff \lambda= 0,\qquad x\in W',\,\, m\in\mathbb{N}.
		\end{equation}
		Note that for each $\phi\in C^\infty_c(W)$, the function
		$$\int_{\mathbb{R}}\langle V(\lambda), \phi\rangle e^{-i\xi\lambda}\diff \lambda$$
		is holomorphic for $\mathrm{Im}\xi< \frac{(c')^2}{c}$, and (\ref{Vint}) implies all its derivatives at $\xi= 0$ are zeros. 
		
		Hence we conclude that the Fourier transform of $\langle V(\lambda), \phi\rangle$ is zero for $\xi\in\mathbb{R}$, so
		$U(x, \frac{1}{\lambda})= U(x, t)= 0$ in $W'\times (0, \infty)$. By the  unique
		continuation property of the classical parabolic operator (see \cite{vessella2003carleman}), we conclude that $U= 0$ in $\Omega\times (0, \infty)$ and thus $u= 0$ in $\Omega$.
		
	\end{proof}
	
	\subsection{Runge approximation property}
	Now we prove a Runge approximation property based on the unique continuation property of the spectral fractional Laplacian and the well-posedness of (\ref{lin}) (and its dual problem). The following proposition can be viewed as a variant of the 
	Runge approximation properties for evolutionary fractional operators established in \cite{ruland2020quantitative, li2021fractional, li2023inverse}.
	
	\begin{prop}\label{RAP}
		The set
		$$S:=\{u_f|_{(0, T)\times(\Omega\setminus W)}: f\in C^\infty_c(W\times (0, T))\}$$
		is dense in $L^2(0, T; L^2(\Omega\setminus W))$. Here $u_f$ is the solution of (\ref{lin}) corresponding to the source $f$.
	\end{prop}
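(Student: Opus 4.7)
The plan is to run a standard Hahn--Banach--plus--duality argument, with the unique continuation property (Proposition \ref{UCP}) doing the final work. Suppose for contradiction that $S$ is not dense in $L^2(0,T; L^2(\Omega\setminus W))$. Then there exists $g\in L^2(0,T; L^2(\Omega\setminus W))$, $g\not\equiv 0$, such that $\int_0^T\langle u_f, g\rangle\,\diff t= 0$ for every $f\in C^\infty_c(W\times (0,T))$, where we extend $g$ by zero to $\Omega\times (0,T)$.

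Next, introduce the time-reversed adjoint problem
\begin{equation*}
\ddt v - \Delta v - \dt(-\Delta)^s v = g \quad \text{in }\Omega\times (0,T), \qquad v|_{\partial\Omega}=0, \qquad v(T)=\dt v(T)=0.
\end{equation*}
Setting $\tilde v(t):= v(T-t)$ converts this into the forward problem in (\ref{lin}) with source $\tilde g(t):=g(T-t)$, so by Proposition \ref{linwell} the adjoint problem has a unique solution $v$ with the analogous regularity. Testing the adjoint equation against $u_f$, testing (\ref{lin}) against $v$, subtracting and integrating by parts in space and time (the boundary terms at $t=0$, $t=T$ and on $\partial\Omega$ vanish by the respective initial/terminal/Dirichlet conditions, and $(-\Delta)^s$ is self-adjoint on $\tilde H^s(\Omega)$) yields
\begin{equation*}
\int_0^T \langle f, v\rangle\,\diff t \;=\; \int_0^T \langle u_f, g\rangle\,\diff t \;=\; 0
\end{equation*}
for every $f\in C^\infty_c(W\times (0,T))$. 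Hence $v=0$ on $W\times (0,T)$.

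Now I use that $g$ is supported in $(\Omega\setminus W)\times (0,T)$. Restricted to $W\times (0,T)$, the adjoint equation reduces to $\dt (-\Delta)^s v = 0$ (since $v$ and therefore $\Delta v$ and $\ddt v$ all vanish on $W\times (0,T)$). Because $v(T)=0$ on all of $\Omega$, we have $(-\Delta)^s v(T)=0$, and integrating $\dt(-\Delta)^s v=0$ backward from $T$ gives $(-\Delta)^s v(t) = 0$ in $W$ for a.e. $t\in (0,T)$. For each such $t$ both $v(t)$ and $(-\Delta)^s v(t)$ vanish in $W$, so Proposition \ref{UCP} forces $v(t)\equiv 0$ in $\Omega$. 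Consequently $v\equiv 0$ in $\Omega\times (0,T)$, and plugging back into the adjoint equation yields $g\equiv 0$, a contradiction.

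The main technical obstacle is not the Hahn--Banach step itself but the justification of the regularity needed at two places: the integration-by-parts identity (which requires enough smoothness of $v$ near $t=0,T$ and $\partial\Omega$, handled by the time-reversal reduction to Proposition \ref{linwell}), and the conclusion that $\dt (-\Delta)^s v = 0$ on $W\times (0,T)$ implies $(-\Delta)^s v(t) = 0$ in $W$ slice by slice, for which the continuity in $t$ of $(-\Delta)^s v$ in an appropriate dual space together with the terminal condition $v(T)=0$ is used. Once these technicalities are in place, Proposition \ref{UCP} closes the argument at once.
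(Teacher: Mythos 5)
Your proposal is correct and follows essentially the same route as the paper: Hahn--Banach, the time-reversed adjoint problem solved via Proposition \ref{linwell}, integration by parts to get $v=0$ on $W\times(0,T)$, and then Proposition \ref{UCP}. The only (immaterial) difference is at the last step: you integrate $\dt(-\Delta)^s v=0$ backward from $t=T$ and apply the unique continuation property to $v(t)$ itself, whereas the paper applies it directly to $\dt v(t)$ and then uses the terminal conditions to conclude $v=0$; both are valid.
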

	\begin{proof}
		By the Hahn-Banach Theorem, it suffices to prove the following statement: 
		
		Let $g\in L^2(0, T; L^2(\Omega\setminus W))$. If 
		$$\int^T_0\int_{\Omega\setminus W}ug= 0$$ for all $u\in S$
		then $g= 0$. 
		
		We consider $\tilde{g}\in L^2(0, T; L^2(\Omega))$ which is the zero extension of $g$, and the dual problem
		\begin{equation}\label{duallin}
			\left\{
			\begin{aligned}
				\partial^2_{t} v -\Delta v - \partial_t(-\Delta)^s v&= \tilde{g}\quad \,\,\,\Omega\times (0, T)\\
				v(T)= \partial_t v(T)&= 0\quad \,\,\,\Omega.\\
			\end{aligned}
			\right.
		\end{equation}
		Proposition \ref{linwell} ensures that this problem has a unique solution $v$ satisfying
		$$v\in H^2(0, T; H^{-1}(\Omega))\cap W^{1,\infty}(0, T; L^2(\Omega))\cap L^\infty(0, T; \tilde{H}^1(\Omega)),\qquad \partial_t v\in 
		L^2(0, T; \tilde{H}^s(\Omega)).$$ 
		The assumption implies 
		\begin{equation}\label{RAP1}
			0= \int^T_0\langle \partial^2_{t} v -\Delta v -(-\Delta)^s v, u\rangle\diff t
		\end{equation}
		for $u\in S$. Based on the initial and final conditions, we integrate by parts to obtain
		$$-\int^T_0\langle \partial_t(-\Delta)^s v, u\rangle\diff t
		= \int^T_0\langle \partial_t(-\Delta)^s u, v\rangle\diff t,\quad \int^T_0\langle \partial^2_{t} v, u\rangle\diff t=
		\int^T_0\langle \partial^2_{t} u, v\rangle\diff t.$$
		Hence (\ref{RAP1}) implies 
		$$0=\int^T_0\int_{\Omega}fv\diff x\diff t= \int^T_0\int_{W}fv\diff x\diff t$$
		for $f\in C^\infty_c(W\times (0, T))$ since $u$ is the solution of (\ref{lin}). Hence $v= 0$ in $W\times (0, T)$.
		Note that 
		$$\partial^2_{t} v -\Delta v -\partial_t(-\Delta)^s v= 0$$
		in $W\times (0, T)$ since $u$ is the solution of (\ref{duallin}), so $\partial_t(-\Delta)^s v= 0$ in $W\times (0, T)$. By Proposition \ref{UCP}, we have $\partial_t v= 0$ in $\Omega\times (0, T)$. We further conclude that $v= 0$
		in $\Omega\times (0, T)$ based on the final conditions and thus $g= 0$.
	\end{proof}
	
	\subsection{Proof of the main theorem}
	We are ready to prove Theorem \ref{mthm}.
	
	Our proof will heavily rely on the unique continuation property (Proposition
	\ref{UCP}) of the spectral fractional Laplacian and the Runge approximation property (Proposition \ref{RAP}) associated with (\ref{lin}), which are typical nonlocal phenomenons. To relate the nonlinear (\ref{nonlinearfracdamp}) to the linear (\ref{lin}), we will perform a second order linearization. We remark that this kind of multiple-fold linearizations have been
	widely applied in solving inverse problems for nonlinear equations (see for instance, \cite{Kurylev2018, krupchyk2020remark, uhlmann2021inverse}).
	
	\begin{proof}
		For $f_1, f_2\in C^\infty_c(W\times (0, T))$, we use
		$u^{(j)}_{\epsilon_1, \epsilon_2}$ to denote the solution of 
		\begin{equation}\label{equ12}
			\left\{
			\begin{aligned}
				\partial^2_{t} (u- \kappa_j(x, t)u^2) -\Delta u + \partial_t(-\Delta)^s u&= \epsilon_1f_1+ \epsilon_2f_2,\quad \,\,\,(x, t)\in\Omega\times (0, T),\\
				u(0)= \partial_t u(0)&= 0,\quad \,\,\,x\in\Omega\\
			\end{aligned}
			\right.
		\end{equation}
		($j= 1,2$) for small $\epsilon_1, \epsilon_2$. 
		Then 
		$$\frac{\partial}{\partial\epsilon_j}|_{\epsilon_j= 0}u^{(1)}_{\epsilon_1, \epsilon_2}=\frac{\partial}{\partial\epsilon_j}|_{\epsilon_j= 0}u^{(2)}_{\epsilon_1, \epsilon_2}=: w_j$$
		is the solution of 
		\begin{equation}
			\left\{
			\begin{aligned}
				\partial^2_{t} w -\Delta w + \partial_t(-\Delta)^s  w&= f_j,\quad \,\,\,(x, t)\in\Omega\times (0, T),\\
				w(0)= \partial_t w(0)&= 0,\quad \,\,\,x\in\Omega.\\
			\end{aligned}
			\right.
		\end{equation}
		
		Let $v^{(j)}= \frac{\partial^2}{\partial\epsilon_1\partial\epsilon_2}|_{\epsilon_1= \epsilon_2= 0}u^{(j)}_{\epsilon_1, \epsilon_2}$. Then we have
		\begin{equation}\label{vw12}
			\left\{
			\begin{aligned}
				\partial^2_{t} v^{(j)} -\Delta v^{(j)} -2\partial^2_{t}(\kappa_j(x, t)w_1w_2)+ \partial_t(-\Delta)^s v^{(j)}&= 0,\quad \,\,\,(x, t)\in\Omega\times (0, T),\\
				v^{(j)}(0)= \partial_t v^{(j)}(0)&= 0,\quad \,\,\,x\in\Omega.\\
			\end{aligned}
			\right.
		\end{equation}
		
		The assumption (\ref{L12}) implies
		$$u^{(1)}_{\epsilon_1, \epsilon_2}- u^{(2)}_{\epsilon_1, \epsilon_2}= 0
		$$
		in $W\times (0, T)$.
		Then the assumption $\kappa_1= \kappa_2$ in $W\times (0, T)$ implies 
		$$\partial^2_{t} (u^{(1)}_{\epsilon_1, \epsilon_2}- \kappa_1(x, t)(u^{(1)}_{\epsilon_1, \epsilon_2})^2) -\Delta u^{(1)}_{\epsilon_1, \epsilon_2}=
		\partial^2_{t} (u^{(2)}_{\epsilon_1, \epsilon_2}- \kappa_2(x, t)(u^{(2)}_{\epsilon_1, \epsilon_2})^2) -\Delta u^{(2)}_{\epsilon_1, \epsilon_2}$$
		in $W\times (0, T)$.
		By the equation in (\ref{equ12}) we have
		$$(-\Delta)^s\partial_t(u^{(1)}_{\epsilon_1, \epsilon_2}- u^{(2)}_{\epsilon_1, \epsilon_2})= 0$$
		in $W\times (0, T)$. By Proposition \ref{UCP} we conclude that 
		$\partial_t u^{(1)}_{\epsilon_1, \epsilon_2}= \partial_t u^{(2)}_{\epsilon_1, \epsilon_2}$ in $\Omega\times (0, T)$.
		Then the initial conditions imply $u^{(1)}_{\epsilon_1, \epsilon_2}= u^{(2)}_{\epsilon_1, \epsilon_2}$ in $\Omega\times (0, T)$ and thus 
		$v^{(1)}= v^{(2)}$ in $\Omega\times (0, T)$.
		
		Now by the equation in (\ref{vw12}) we have
		\begin{equation}\label{k12w}
			\partial^2_{t}((\kappa_1(x,t)-\kappa_2(x,t))w_1w_2)= 0
		\end{equation}
		in $\Omega\times (0, T)$. We choose $\phi\in C^\infty(\mathbb
		{R}^n)$ s.t. $\mathrm{supp}\,\phi\subset \bar{\Omega}$ and $\phi> 0$ in $\Omega$. Let $\tilde{\phi}(x, t):= (t- T)^2\phi(x)$. Then 
		$\tilde{\phi}(T)= \partial\tilde{\phi}(T)= 0$.
		Let (\ref{k12w}) act on $\tilde{\phi}$. Based on the initial and final conditions, we can integrate by parts to obtain 
		$$\int^T_0\int_{\Omega\setminus W}w_1(x, t)w_2(x, t)(\kappa_1(x, t)-\kappa_2(x, t))\phi(x)\diff x\diff t= 0.$$
		By Proposition \ref{RAP}, we can choose $f_1, f_2\in C^\infty_c(W\times (0, T))$ s.t. 
		$$w_1\to 1,\qquad w_2\to \kappa_1(x, t )-\kappa_2(x, t)$$ in
		$L^2(0, T; L^2(\Omega\setminus W))$.
		Then we take the limit to obtain
		$$\int^T_0\int_{\Omega\setminus W}(\kappa_1(x, t)-\kappa_2(x, t))^2\phi(x)\diff x\diff t= 0.$$
		Hence we conclude that $\kappa_1(x, t)=\kappa_2(x, t)$ in $(\Omega\setminus W)\times (0, T)$ and thus in $\Omega\times (0, T)$.
	\end{proof}
	
	\appendix
	\section{Appendix}
	To prove Proposition \ref{linkpest}, we need the following embedding result, which follows from the Sobolev embedding  $H^k(\Omega)\hookrightarrow C^{k-2}(\bar{\Omega})$(where we use the assumption $n=3$) and Theorem 2 in Subsection 5.9.2 in \cite{evans2022partial}.
	\begin{prop}\label{emb}
		Suppose $l, k$ are positive integers and $k\geq 2$. If $u \in H^l(0,T; H^k (\Omega))$,
		then
		\[
		u \in C^{l-1}([0,T]; H^k(\Omega)) \quad \text{and} \quad u \in C^{l-1}([0,T]; C^{k-2}(\bar{\Omega}))
		\]
		%after possibly being refined on a set of measure zero,
		with the estimates
		\[
		\sum_{j = 0}^{l-1} \sup_{t \in [0, T]}\| \dt^{j} u(t) \|_{C^{k-2}(\bar{\Omega})} \leq C\sum_{j = 0}^{l-1} \sup_{t \in [0, T]}\| \dt^{j} u(t) \|_{H^k(\Omega)}
		\leq C'\| u\|_{H^{l}(0,T; H^k (\Omega))}.\]
	\end{prop}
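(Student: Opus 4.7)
The proof combines a Bochner--Sobolev embedding in the time variable with the classical Sobolev embedding in the space variable, which is exactly the two-ingredient recipe hinted at just before the statement. The plan is therefore first to upgrade temporal $L^2$-regularity to uniform continuity via the Banach-space-valued $H^1 \hookrightarrow C$ embedding (Theorem 2 of Subsection 5.9.2 in \cite{evans2022partial}), and then to apply the spatial embedding $H^k(\Omega) \hookrightarrow C^{k-2}(\bar{\Omega})$ pointwise in $t$. The two inequalities in the estimate will arise from these two steps in the opposite order to how they are written.

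For the time step, I observe that $u \in H^l(0,T; H^k(\Omega))$ means each distributional derivative $\partial_t^j u$ for $0 \leq j \leq l$ lies in $L^2(0,T; H^k(\Omega))$. In particular, for $0 \leq j \leq l-1$ we have $\partial_t^j u \in H^1(0,T; H^k(\Omega))$. Applying the cited theorem of Evans with Banach-space target $X = H^k(\Omega)$ to each such $\partial_t^j u$ produces a continuous representative in $C([0,T]; H^k(\Omega))$ satisfying
\[
\sup_{t \in [0,T]} \|\partial_t^j u(t)\|_{H^k(\Omega)} \leq C \|\partial_t^j u\|_{H^1(0,T; H^k(\Omega))} \leq C' \|u\|_{H^l(0,T; H^k(\Omega))}.
\]
Summing over $j = 0, \ldots, l-1$ yields both the assertion $u \in C^{l-1}([0,T]; H^k(\Omega))$ and the second (right-hand) inequality of the stated estimate.

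For the space step, the Sobolev embedding $H^k(\Omega) \hookrightarrow C^{k-2}(\bar{\Omega})$ is valid because $n=3$ forces the exponent condition $k > (k-2) + n/2$ to hold whenever $k \geq 2$. Applying this embedding pointwise in $t$ to $\partial_t^j u(t)$ gives $\|\partial_t^j u(t)\|_{C^{k-2}(\bar{\Omega})} \leq C \|\partial_t^j u(t)\|_{H^k(\Omega)}$, which upon taking the supremum over $t$ and summing in $j$ produces the first (left-hand) inequality. Because the $H^k$-valued map $t \mapsto \partial_t^j u(t)$ is continuous by the time step and the spatial embedding is a bounded linear operator, composition yields continuity of $t \mapsto \partial_t^j u(t)$ as a $C^{k-2}(\bar{\Omega})$-valued map, establishing the second assertion. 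There is no real technical obstacle here: the argument is a direct assembly of two textbook results, with the restriction $n=3$ entering only through the Sobolev exponent count in the spatial embedding, as the authors already remark in Section~\ref{Sec:Prelim}.
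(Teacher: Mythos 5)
Your proof is correct and follows exactly the route the paper intends: the paper offers no detailed argument, merely citing the spatial embedding $H^k(\Omega)\hookrightarrow C^{k-2}(\bar{\Omega})$ (valid for $n=3$, $k\geq 2$) together with Theorem 2 in Subsection 5.9.2 of \cite{evans2022partial}, and your write-up assembles precisely these two ingredients in the same way. No discrepancies to report.
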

	
	Now we provide an outline of the proof of Proposition \ref{linkpest}.
	\begin{proof}
		We write $a= 1- 2\kappa v$. As in the proof of Proposition \ref{linwell}, we consider the approximate solution  $u_l(t)=\sum_{k=1}^l u_{l,k}(t) \phi_k$. 
		We differentiate the equation $i-1$ times ($i= 1, 2,\cdots, m$) with respect to $t$ to obtain  
		\begin{equation}\label{dtul}
			\lge a\partial^{i+1}_{t} u_l, w \rge +
			\sum_{j=2}^{i} C_{i,j}\lge  (\dt^{i+1-j} a)\dt^{j} u_l, w\rangle+
			\lge \partial^{i-1}_{t}\nabla u_l, \nabla w \rge + \lge \partial^{i}_{t} (-\Delta)^s u_l, w \rge
			= \lge \partial^{i-1}_{t}f, w \rge
		\end{equation}
		for any $w$ in the space spanned by the 
		eigenfunctions $\phi_1, \ldots, \phi_l$. (The sum on the left hand side does not appear when $i=1$.)
		
		\medskip
		\noindent\textbf{Step 1.} We choose $w= \partial^i_t u_l$ in (\ref{dtul}). 
		
		The smallness assumption on $v$ enables us to obtain the estimate
		$$\int^t_0\langle a(\tau)\partial^{i+1}_tu_l(\tau), \partial^{i}_tu_l(\tau)\rangle\diff\tau
		= \frac{1}{2}\langle a(t)\partial^{i}_tu_l(t), \partial^{i}_tu_l(t)\rangle-\frac{1}{2}
		\int^t_0\langle \partial_t a(\tau)\partial^{i}_tu_l(\tau), \partial^{i}_tu_l(\tau)\rangle\diff\tau$$
		$$\geq \frac{1}{4}\|\partial^i_t u_l(t)\|^2_{L^2}
		- C\int^t_0\|\partial^i_t u_l(\tau)\|^2_{L^2}\diff \tau.$$
		
		The key point is to estimate the term $\langle \partial^{i+1-j}_t a(\tau)\partial^{j}_tu_l(\tau), \partial^{i}_tu_l(\tau)\rangle$ in the sum in (\ref{dtul}).
		
		Note that if $i\leq m-2$, then $i+1-j\leq m-3$ for $j\geq 2$. In this case by Proposition \ref{emb} we have
		$$a\in \bigcap_{k=2}^{m-1} C^{m-k-1}([0,T]; C^{k-2}(\bar{\Omega}))\subset C^{i+1-j}([0,T]; C^{k-2}(\bar{\Omega})).$$
		Then we have the estimate
		\begin{equation}\label{m-2est}
			\int^t_0\langle \partial^{i+1-j}_t a(\tau)\partial^{j}_tu_l(\tau), \partial^{i}_tu_l(\tau)\rangle\diff\tau
			\leq C_a\int^t_0(\|\partial^{j}_tu_l(\tau)\|^2_{L^2}+ \|\partial^{i}_tu_l(\tau)\|^2_{L^2})\diff\tau,
		\end{equation}
		and we can use the argument similar to the one in the proof of Proposition \ref{linwell} to obtain 
		$$\|\partial^i_t u_l(t)\|_{L^2}^2
		+ \|\partial^{i-1}_t\nabla u_l(t)\|_{L^2}^2  + \int_0^t\|\partial^i_t (-\Delta)^{s/2} u_l(\tau)\|_{L^2}^2\diff \tau\\
		\leq C\|f\|_{Z^{m-1}}^2
		$$
		for $i\leq m-2$ and $t\in [0, T]$. In particular, we have
		\begin{equation}\label{m-2tpt}
			\int^T_0\|\partial^i_t u_l(t)\|_{L^2}^2\diff t
			+ \int^T_0|\partial^{i-1}_t\nabla u_l(t)\|_{L^2}^2\diff t \\
			\leq C'\|f\|_{Z^{m-1}}^2,\quad i\leq m-2.
		\end{equation}
		
		For the cases $i= m-1, i=m$, we cannot control the $L^\infty$-bound of $\partial^{m-2}_t a, \partial^{m-1}_t a$ so (\ref{m-2est}) will not work. We will deal with these two cases in Step 3.
		
		\medskip
		\noindent\textbf{Step 2.} Next we choose $w= \partial^{i-1}_t (-\Delta)^k u_l$ in (\ref{dtul}) to obtain higher regularity results.
		For $i\leq m-2$ and $k\leq m-i-2$, we can derive the estimate
		\begin{equation}\label{Hkest}
			\int^t_0\|\partial^{i-1}_tu_l(\tau)\|^2_{H^{k+1}}\diff\tau\leq C
			\int^t_0(\|\partial^{i+1}_tu_l(\tau)\|^2_{H^{k+1}}+ +\sum^i_{j=2}\|\partial^{j}_tu_l(\tau)\|^2_{H^{k}}+ \|\partial^{i-1}_tf(\tau)\|^2_{H^{k}})\diff\tau
		\end{equation}
		for $t\in [0, T]$ based on  (\ref{dtul}) and the $L^\infty$-boundedness of the derivatives of $a$.
		
		Let $k=1$ in (\ref{Hkest}). By (\ref{m-2tpt}) we have
		$$\int^t_0\|\partial^{i-1}_tu_l(\tau)\|^2_{H^{2}}\diff\tau\leq C\|f\|^2_{Z^{m-1}}.$$
		The assumption $m\geq 8$ and Proposition \ref{emb} ensure that 
		\begin{equation}\label{pt23bdd}
			\partial^{j}_tu_l\in H^1(0, T; H^2(\Omega))\hookrightarrow C([0, T]; C(\bar{\Omega})),\qquad j= 2,3.
		\end{equation}
		
		\medskip
		\noindent\textbf{Step 3.} For the case $i= m-1$, the first term in the sum in (\ref{dtul}) has the estimate
		$$\int^t_0\langle \partial^{m-2}_t a(\tau)\partial^{2}_tu_l(\tau), \partial^{m-1}_tu_l(\tau)\rangle\diff\tau$$
		$$\leq \frac{1}{C}\|\partial^{2}_tu_l\|_{L^\infty}\int^t_0\|\partial^{m-2}_t a(\tau)\|^2_{L^2}\diff \tau+ C\int^t_0\|\partial^{m-1}_t u_l(\tau)\|^2_{L^2}\diff \tau$$
		by (\ref{pt23bdd}). We similarly deal with the case $i= m$ based on the $L^\infty$-boundedness of 
		$\partial^{3}_tu_l$. Then we can show that (\ref{m-2tpt}) also works for $i= m-1$ and $i= m$, i.e.
		\begin{equation}\label{mtpt}
			\int^T_0\|\partial^i_t u_l(t)\|_{L^2}^2\diff t
			+ \int^T_0|\partial^{i-1}_t\nabla u_l(t)\|_{L^2}^2\diff t \\
			\leq C'\|f\|_{Z^{m-1}}^2,\quad i\leq m.
		\end{equation}
		
		\medskip
		\noindent\textbf{Step 4.} (\ref{mtpt})
		implies that 
		$$u_l\in H^m(0,T; L^2(\Omega))\cap H^{m-1}(0,T; H^1(\Omega)).$$
		Then we can inductively show that $u_l$ is bounded above by $C\|f\|_{\zmm}$ in $H^{k}(0,T; H^{m-k}(\Omega))$ for 
		$0\leq k\leq m$. In fact, suppose we have verified that the estimate holds for $k\geq K$, we can show that the estimate holds for $K-1$ based on the inductive hypothesis and (\ref{dtul}) when choosing $w= \partial^{i-1}_t (-\Delta)^{m-k}u_l$.
		
		Once we have the boundedness of $\{u_l\}^\infty_{l=1}$, we can use the standard compactness argument to conclude that a subsequence of $\{u_l\}^\infty_{l=1}$ weakly convergent to the solution $u$ of (\ref{linkp}).
		
	\end{proof}
	\medskip
	\bibliographystyle{plain}
	{\small\bibliography{Reference9}}

\begin{thebibliography}{10}

\bibitem{aanonsen1984distortion}
Sigurd~Ivar Aanonsen, Tor Barkve, Jacqueline~Naze Tjo/tta, and Sigve Tjo/tta.
\newblock Distortion and harmonic generation in the nearfield of a finite
  amplitude sound beam.
\newblock {\em The Journal of the Acoustical Society of America},
  75(3):749--768, 1984.

\bibitem{ultra21}
Sebastian Acosta, Gunther Uhlmann, and Jian Zhai.
\newblock Nonlinear ultrasound imaging modeled by a {W}estervelt equation.
\newblock {\em to appear SIAM J. Appl. Math. arXiv:2105.05423}, 2021.

\bibitem{Anvari2015}
Arash Anvari, Flemming Forsberg, and Anthony~E. Samir.
\newblock A primer on the physical principles of tissue harmonic imaging.
\newblock {\em {RadioGraphics}}, 35(7):1955--1964, 2015.

\bibitem{bonforte2014existence}
Matteo Bonforte, Yannick Sire, and Juan~Luis V{\'a}zquez.
\newblock Existence, uniqueness and asymptotic behaviour for fractional porous
  medium equations on bounded domains.
\newblock {\em Discrete and Continuous Dynamical Systems}, 35(12):5725--5767,
  2015.

\bibitem{caffarelli2016fractional}
Luis~A Caffarelli and Pablo~Ra{\'u}l Stinga.
\newblock Fractional elliptic equations, caccioppoli estimates and regularity.
\newblock {\em Annales de l'Institut Henri Poincar{\'e} C, Analyse non
  lin{\'e}aire}, 33(3):767--807, 2016.

\bibitem{chen2004fractional}
Wen Chen and Sverre Holm.
\newblock Fractional laplacian time-space models for linear and nonlinear lossy
  media exhibiting arbitrary frequency power-law dependency.
\newblock {\em The Journal of the Acoustical Society of America},
  115(4):1424--1430, 2004.

\bibitem{chien2022inverse}
Chun-Kai~Kevin Chien.
\newblock An inverse problem for fractional connection laplacians.
\newblock {\em arXiv preprint arXiv:2208.00454}, 2022.

\bibitem{covi2023reduction}
Giovanni Covi, Tuhin Ghosh, Angkana R{\"u}land, and Gunther Uhlmann.
\newblock A reduction of the fractional {Calder\'on} problem to the local
  {Calder\'on} problem by means of the caffarelli-silvestre extension.
\newblock {\em arXiv preprint arXiv:2305.04227}, 2023.

\bibitem{covi2022global}
Giovanni Covi, Jesse Railo, and Philipp Zimmermann.
\newblock The global inverse fractional conductivity problem.
\newblock {\em arXiv preprint arXiv:2204.04325}, 2022.

\bibitem{davies1989heat}
Edward~Brian Davies.
\newblock {\em Heat kernels and spectral theory}, volume~92.
\newblock Cambridge University Press, 1989.

\bibitem{Demi2014}
L.~Demi and M.D. Verweij.
\newblock Nonlinear acoustics.
\newblock In {\em Comprehensive Biomedical Physics}, pages 387--399. Elsevier,
  2014.

\bibitem{eptaminitakis2022weakly}
Nikolas Eptaminitakis and Plamen Stefanov.
\newblock Weakly nonlinear geometric optics for the westervelt equation and
  recovery of the nonlinearity.
\newblock {\em arXiv preprint arXiv:2208.13945}, 2022.

\bibitem{evans2022partial}
Lawrence~C Evans.
\newblock {\em Partial differential equations}, volume~19.
\newblock American Mathematical Society, 2022.

\bibitem{feizmohammadi2021fractional}
Ali Feizmohammadi, Tuhin Ghosh, Katya Krupchyk, and Gunther Uhlmann.
\newblock Fractional anisotropic {Calder\'on} problem on closed riemannian
  manifolds.
\newblock {\em arXiv preprint arXiv:2112.03480}, 2021.

\bibitem{fu2022inverse}
Song-Ren Fu.
\newblock Inverse problem of recovering the time-dependent damping and
  nonlinear terms for wave equations.
\newblock {\em arXiv preprint arXiv:2212.01815}, 2022.

\bibitem{ghosh2020calderon}
Tuhin Ghosh, Mikko Salo, and Gunther Uhlmann.
\newblock The {Calder\'on} problem for the fractional {Schr\"odinger} equation.
\newblock {\em Analysis \& PDE}, 13(2):455--475, 2020.

\bibitem{ghosh2021calder}
Tuhin Ghosh and Gunther Uhlmann.
\newblock The {Calder\'on} problem for nonlocal operators.
\newblock {\em arXiv preprint arXiv:2110.09265}, 2021.

\bibitem{Harrer2003}
J.~U. Harrer.
\newblock Second harmonic imaging: a new ultrasound technique to assess human
  brain tumour perfusion.
\newblock {\em Journal of Neurology, Neurosurgery {\&} Psychiatry},
  74(3):333--342, 2003.

\bibitem{humphrey2003non}
VF~Humphrey.
\newblock Non-linear propagation for medical imaging.
\newblock {\em WCU}, 2003:73--80, 2003.

\bibitem{kaltenbacher2011well}
Barbara Kaltenbacher and Irena Lasiecka.
\newblock Well-posedness of the westervelt and the kuznetsov equation with
  nonhomogeneous neumann boundary conditions.
\newblock In {\em Conference Publications}, volume 2011, page 763. American
  Institute of Mathematical Sciences, 2011.

\bibitem{kaltenbacher2012analysis}
Barbara Kaltenbacher and Irena Lasiecka.
\newblock An analysis of nonhomogeneous kuznetsov's equation: Local and global
  well-posedness; exponential decay.
\newblock {\em Mathematische Nachrichten}, 285(2-3):295--321, 2012.

\bibitem{k2022parabolic}
Barbara Kaltenbacher and Vanja Nikolic.
\newblock Parabolic approximation of quasilinear wave equations with
  applications in nonlinear acoustics.
\newblock {\em SIAM Journal on Mathematical Analysis}, 54(2):1593--1622, 2022.

\bibitem{kaltenbacher2021identification}
Barbara Kaltenbacher and William Rundell.
\newblock On the identification of the nonlinearity parameter in the westervelt
  equation from boundary measurements.
\newblock {\em arXiv preprint arXiv:2102.07608}, 2021.

\bibitem{kaltenbacher2021some}
Barbara Kaltenbacher and William Rundell.
\newblock Some inverse problems for wave equations with fractional derivative
  attenuation.
\newblock {\em Inverse Problems}, 37(4):045002, 2021.

\bibitem{kaltenbacher2022simultanenous}
Barbara Kaltenbacher and William Rundell.
\newblock On the simultanenous identification of two space dependent
  coefficients in a quasilinear wave equation.
\newblock {\em arXiv preprint arXiv:2210.08063}, 2022.

\bibitem{kow2022calderon}
Pu-Zhao Kow, Yi-Hsuan Lin, and Jenn-Nan Wang.
\newblock The {Calder\'on} problem for the fractional wave equation: uniqueness
  and optimal stability.
\newblock {\em SIAM Journal on Mathematical Analysis}, 54(3):3379--3419, 2022.

\bibitem{krupchyk2020remark}
Katya Krupchyk and Gunther Uhlmann.
\newblock A remark on partial data inverse problems for semilinear elliptic
  equations.
\newblock {\em Proceedings of the American Mathematical Society},
  148(2):681--685, 2020.

\bibitem{Kurylev2018}
Yaroslav Kurylev, Matti Lassas, and Gunther Uhlmann.
\newblock Inverse problems for {L}orentzian manifolds and non-linear hyperbolic
  equations.
\newblock {\em Inventiones Mathematicae}, 212(3):781--857, 2018.

\bibitem{lai2023inverse}
Ru-Yu Lai and Ting Zhou.
\newblock An inverse problem for the non-linear fractional magnetic
  schr{\"o}dinger equation.
\newblock {\em Journal of Differential Equations}, 343:64--89, 2023.

\bibitem{li2020determining}
Li~Li.
\newblock Determining the magnetic potential in the fractional magnetic
  {Calder\'on} problem.
\newblock {\em Communications in Partial Differential Equations, in press},
  2020.

\bibitem{li2021fractional}
Li~Li.
\newblock A fractional parabolic inverse problem involving a time-dependent
  magnetic potential.
\newblock {\em SIAM Journal on Mathematical Analysis}, 53(1):435--452, 2021.

\bibitem{li2023porous}
Li~Li.
\newblock An inverse problem for the fractional porous medium equation.
\newblock {\em Asymptotic Analysis}, 131(3-4):583--594, 2023.

\bibitem{li2023elas}
Li~Li.
\newblock On inverse problems arising in fractional elasticity.
\newblock {\em Journal of Spectral Theory}, 12(4):1383--1404, 2023.

\bibitem{li2023inverse}
Li~Li.
\newblock On inverse problems for uncoupled space-time fractional operators
  involving time-dependent coefficients.
\newblock {\em Inverse Problems and Imaging}, 17(4):890--906, 2023.

\bibitem{lin2023strong}
Ching-Lung Lin, Hongyu Liu, and Catharine~WK Lo.
\newblock Strong uniqueness principle for fractional polyharmonic operators and
  applications to inverse problems.
\newblock {\em arXiv preprint arXiv:2307.00744}, 2023.

\bibitem{quan2022calder}
Hadrian Quan and Gunther Uhlmann.
\newblock The {Calder\'on} problem for the fractional dirac operator.
\newblock {\em arXiv preprint arXiv:2204.00965}, 2022.

\bibitem{ruland2015unique}
Angkana R{\"u}land.
\newblock Unique continuation for fractional schr{\"o}dinger equations with
  rough potentials.
\newblock {\em Communications in Partial Differential Equations},
  40(1):77--114, 2015.

\bibitem{ruland2020quantitative}
Angkana R{\"u}land and Mikko Salo.
\newblock Quantitative approximation properties for the fractional heat
  equation.
\newblock {\em Mathematical Control \& Related Fields}, 10(1):1--26, 2020.

\bibitem{uhlmann2021inverse}
Gunther Uhlmann and Jian Zhai.
\newblock On an inverse boundary value problem for a nonlinear elastic wave
  equation.
\newblock {\em Journal de Math{\'e}matiques Pures et Appliqu{\'e}es},
  153:114--136, 2021.

\bibitem{UZ_acoustic}
Gunther Uhlmann and Yang Zhang.
\newblock An inverse boundary value problem arising in nonlinear acoustics.
\newblock {\em arXiv:2203.02888, accepted by SIAM Journal on Mathematical
  Analysis}, 2022.

\bibitem{Uhlmann2021a}
Gunther Uhlmann and Yang Zhang.
\newblock Inverse boundary value problems for wave equations with quadratic
  nonlinearities.
\newblock {\em Journal of Differential Equations}, 309:558--607, 2022.

\bibitem{vessella2003carleman}
Sergio Vessella.
\newblock Carleman estimates, optimal three cylinder inequality, and unique
  continuation properties for solutions to parabolic equations.
\newblock {\em Communications in Partial Differential Equations},
  28(3-4):637--676, 2003.

\bibitem{zhang2023nonlinear}
Yang Zhang.
\newblock Nonlinear acoustic imaging with damping.
\newblock {\em arXiv preprint arXiv:2302.14174}, 2023.

\end{thebibliography}
\end{document}